\documentclass[12pt]{amsart}
\usepackage{amsrefs,amssymb,amsmath,amssymb,amsthm,verbatim}
\usepackage[ocgcolorlinks,linktoc=all]{hyperref}
\usepackage{lipsum}
\hypersetup{citecolor=blue,linkcolor=magenta}
\usepackage{cleveref}
\usepackage{color,soul}
\usepackage{mathtools}
\usepackage{esint}

\newtheorem{theorem}{Theorem}[section]

\newtheorem*{thmmain}{Theorem}
\newtheorem{lemma}[theorem]{Lemma}

\newtheorem{qu}{Question}

\theoremstyle{definition}
\newtheorem{definition}[theorem]{Definition}

\theoremstyle{remark}
\numberwithin{equation}{section}

\theoremstyle{remark}
\newtheorem{remark}[theorem]{Remark}

\numberwithin{equation}{section}

\begin{document}

\title[global and local stability results]
 {On the stability of the $L_p$-curvature}
\author[M. N. Ivaki]{Mohammad N. Ivaki}

\begin{abstract}
It is known that the $L_p$-curvature of a smooth, strictly convex body in $\mathbb{R}^{n}$ is constant only for origin-centred balls when $1\neq p>-n$, and only for balls when $p=1$. If $p=-n$, then the $L_{-n}$-curvature is constant only for origin-symmetric ellipsoids. We prove `local' and `global' stability versions of these results. For $p\geq 1$, we prove a global stability result: if the $L_p$-curvature is almost a constant, then the volume symmetric difference of $\tilde{K}$ and a translate of the unit ball $B$ is almost zero. Here $\tilde{K}$ is the dilation of $K$ with the same volume as the unit ball. For $0\leq p<1$, we prove a similar result in the class of origin-symmetric bodies in the $L^2$-distance. In addition, for $-n<p<0$, we prove a local stability result: There is a neighborhood of the unit ball that any smooth, strictly convex body in this neighborhood with `almost' constant $L_p$-curvature is `almost' the unit ball. For $p=-n$, we prove a global stability result in $\mathbb{R}^2$ and a local stability result for $n>2$ in the Banach-Mazur distance.
\end{abstract}
\maketitle
\section{Introduction}
A compact convex subset of $\mathbb{R}^{n}$, $n$-dimensional Euclidean space, with non-empty interior is called a \emph{convex body}. The set of convex bodies in $\mathbb{R}^{n}$ is denoted by $\mathcal{K}^n$ and those with the origin contained in the interior are denoted by $\mathcal{K}^n_0$. We write $\mathcal{F}^n$, and $\mathcal{F}^n_0$ for the set of smooth, strictly convex bodies in $\mathcal{K}^n$ and $\mathcal{K}^n_{0}$ respectively. Moreover, $B$ and $S^{n-1}$ denote the unit ball and the unit sphere of $\mathbb{R}^n$ respectively. $\tilde{K}$ denotes the dilation of $K$ whose $n$-dimensional Lebesgue measure, $V(\tilde{K})$, equals to that of the unit ball; $V(\tilde{K})=V(B)=\kappa_n.$

The support function of a convex body $K$ is defined by
\[h_K(u):= \max_{x\in K} x\cdot u,\quad \forall u\in S^{n-1}.\]

Let $K\in \mathcal{F}^n_0$ and $\nu_K: \partial K\to S^{n-1}$ be the Gauss map which takes $x$ on the boundary of $K$ to its unique outer unit normal vector, and let\[\nu_K^{-1}:S^{n-1}\to\mathbb{R}^{n}\] be the Gauss parameterization of $\partial K$. In this case, we have
 \[h_K(u)= u\cdot\nu_K^{-1}(u).\]
We write $g,\nabla$ for the standard round metric and the corresponding Levi-Civita connection of the unit sphere.
The Gauss curvature of $\partial K$, $\mathcal{K}_K$, and the curvature function of $\partial K$, $f_K$ (as a function on the unit sphere), are related to the support function of the convex body by \[f_K=\frac{1}{\mathcal{K}_K\circ\nu_K^{-1}}=\frac{\det(\nabla^2_{i,j}h_K+g_{ij}h_K)}{\det(g_{ij})}.\]
The function $h_K^{1-p}f_K$ is called the $L_p$-curvature function of $K.$

For $K\in \mathcal{F}_0^n$ we define the scale invariant quantity
\[\mathcal{R}_p(K)=\max_{S^{n-1}} (h_K^{1-p}f_K)/\min_{S^{n-1}} (h_K^{1-p}f_K).\]

We prove stability versions of the following theorem, which is due to a collective work of Firey, Lutwak, Andrews, Brendle, Choi, and Daskalopoulos \cites{Fir74,Lut93,An99a,BCD}: 

\begin{thmmain}
Let $p\in (-n,\infty),\,p\neq1$. If $K\in\mathcal{F}_0^n$ satisfies \[h_K^{1-p}f_K\equiv 1,\] then $K$ is the unit ball.
\end{thmmain}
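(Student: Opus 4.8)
\emph{Proof proposal.} The plan is to rewrite the hypothesis as the Monge--Amp\`ere type equation on the unit sphere,
\[\det(\nabla^2_{i,j}h+g_{ij}h)=h^{\,p-1}\det(g_{ij})\quad\text{on }S^{n-1},\quad h=h_K>0,\]
to note that $h\equiv 1$ solves it, and to prove uniqueness. No single argument handles the whole interval, so I would split into the regimes $p>1$ and $-n<p<1$; the case $p=1$ must be excluded because there the equation $f_K\equiv 1$ is translation invariant and only forces $K$ to be \emph{some} ball.

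For $p>1$ I would use the $L_p$-Brunn--Minkowski theory. The identity $h_K^{1-p}f_K\equiv 1$ collapses the $L_p$-mixed volume: $V_p(K,L)=\frac1n\int_{S^{n-1}}h_L^{\,p}\,h_K^{1-p}f_K\,\dd u=\frac1n\int_{S^{n-1}}h_L^{\,p}\,\dd u$ for every $L\in\mathcal{K}^n_0$. Taking $L=B$ gives $V_p(K,B)=\kappa_n$, while (using $f_K=h_K^{p-1}$ and $h_B\equiv f_B\equiv 1$) one gets $V_p(B,K)=\frac1n\int_{S^{n-1}}h_K^{\,p}\,\dd u=\frac1n\int_{S^{n-1}}h_Kf_K\,\dd u=V(K)$. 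Applying the $L_p$-Minkowski inequality $V_p(X,Y)^n\ge V(X)^{\,n-p}V(Y)^{\,p}$ --- with equality precisely for dilates when $p>1$, by \cite{Lut93} --- to the pairs $(K,B)$ and $(B,K)$ yields $\kappa_n^{\,n-p}\ge V(K)^{\,n-p}$ and $V(K)^{\,n-p}\ge\kappa_n^{\,n-p}$, hence equality throughout, so $K$ is a dilate of $B$; substituting $K=cB$ into the equation gives $c^{\,n-p}=1$, whence $K=B$ when $p\ne n$. (A self-contained shortcut covers $p>n$: at a maximum of $h$ one has $\nabla^2h\le 0$, so $\det(\nabla^2h+gh)\le h^{n-1}\det g$, and the equation forces $h_{\max}^{\,p-n}\le 1$; the reverse inequality at a minimum, together with $p>n$, gives $h\equiv 1$. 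This does not reach $1<p\le n$, which is exactly where the $L_p$-Minkowski inequality is needed.)

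For $-n<p<1$ I would pass to curvature flows, and I expect this to be the main obstacle. Setting $\alpha:=\frac1{1-p}\in\bigl(\tfrac1{n+1},1\bigr)$, a body $K$ with $h_K^{1-p}f_K\equiv 1$ is precisely a self-similarly shrinking soliton of the $\alpha$-Gauss curvature flow $\partial_t X=-\mathcal{K}^{\alpha}\nu$ (here $\mathcal{K}$ is the Gauss curvature of the flowing hypersurface): inserting the homothetic ansatz $X(\cdot,t)=\phi(t)\,X_\infty$ into $\partial_t h=-\mathcal{K}^{\alpha}$ and separating variables forces the support function of the profile $X_\infty$ to satisfy $h^{1/\alpha}f=\text{const}$, i.e.\ the present equation since $1/\alpha=1-p$. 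It then remains to classify the closed convex solitons of this flow: $p=0$ is Firey's conjecture \cite{Fir74}, settled in low dimensions by Andrews \cite{An99a}, and the whole range $\alpha>\tfrac1{n+1}$ --- equivalently $p>-n$, with $p=-n$ (ellipsoids) the borderline --- is resolved by Brendle--Choi--Daskalopoulos \cite{BCD}; it gives $K=cB$, and then $c^{\,n-p}=1$ with $p<n$ forces $K=B$. The real difficulty is that, unlike for $p>1$, there is no pointwise maximum principle producing roundness: the soliton classification relies on global monotonicity (entropy) quantities along the flow together with a spectral/rigidity analysis of the linearization at the round soliton, and in dimension $\ge 3$ this is the substantive content of the theorem.
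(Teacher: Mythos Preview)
The paper does not give its own proof of this statement: it is quoted as a known theorem, attributed to the collective work of Firey, Lutwak, Andrews, and Brendle--Choi--Daskalopoulos \cites{Fir74,Lut93,An99a,BCD}, and the rest of the paper is devoted to stability versions of it. So there is no in-paper argument to compare against beyond those citations.

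Your sketch faithfully reproduces exactly that split. The $p>1$ case via two applications of the $L_p$-Minkowski inequality (to $(K,B)$ and $(B,K)$) is Lutwak's argument, and your identification of the $-n<p<1$ case with the soliton classification for the $\alpha$-Gauss curvature flow, culminating in \cite{BCD}, is precisely what the paper invokes. Two small corrections: for $-n<p<1$ one has $\alpha=\tfrac{1}{1-p}\in\bigl(\tfrac{1}{n+1},\infty\bigr)$, not $\bigl(\tfrac{1}{n+1},1\bigr)$ (the latter is only $-n<p<0$); this does not affect the argument since \cite{BCD} covers all $\alpha>\tfrac{1}{n+1}$. And your caveat ``$K=B$ when $p\neq n$'' is well taken: at $p=n$ the equation $h_K^{1-n}f_K\equiv 1$ is scale-invariant and every origin-centred ball solves it, so the theorem as literally stated in the paper is slightly imprecise at that single value; your $L_p$-Minkowski argument still yields that $K$ is a dilate of $B$ there (equality is attained in $V_p(K,B)\geq V(K)^{0}V(B)^{1}$), which is the correct conclusion.
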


\begin{qu}\label{stability p geq 1}
Is there an increasing function $f$ with $\lim\limits_{\varepsilon\to 0}f(\varepsilon)=0$ with the following property? If $K\in\mathcal{F}_0^n$ satisfies
$\mathcal{R}_{p}(K)\leq 1+\varepsilon,$
then $K$ is $f(\varepsilon)$-close to a ball in a suitable sense?
\end{qu}

For $p\geq 1$ due to the $L_p$-Minkowski inequality and a refinement of Urysohn's inequality from \cite{Seg12} we can obtain a global stability result in a very strong sense. The relative asymmetry of two convex bodies $K,L$ is defined as
\[\mathcal{A}(K,L):=\inf_{x\in\mathbb{R}^n}\frac{V(K\Delta (\lambda L+x))}{V(K)},\quad \mbox{where}~\lambda^n=\frac{V(K)}{V(L)}\]
and $K\Delta L=(K\setminus L)\cup (L\setminus K).$ 
\begin{theorem}\label{lp>1}
Let $p\geq1$. There exists a constant $C$ independent of dimension with the following property. Any $K\in\mathcal{F}_0^n$ satisfies
\[\mathcal{A}(\tilde{K},B)\leq Cn^{2.5} \left(\mathcal{R}_{p}(K)^{\frac{1}{p}}-1\right)^{\frac{1}{2}}\]
\end{theorem}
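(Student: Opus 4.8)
The plan is to bound, for the volume-normalized body $\tilde K$, the deficit in Urysohn's inequality by a multiple of $\mathcal{R}_p(K)^{1/p}-1$, and then feed this into Segal's stability form of Urysohn's inequality from \cite{Seg12}. Since $\mathcal{R}_p$ is invariant under dilations of $K$ and $\tilde K$ already has $V(\tilde K)=\kappa_n=V(B)$ --- so that the dilation factor in $\mathcal{A}(\tilde K,B)$ is $1$ --- I may replace $K$ by $\tilde K$ and assume throughout that $V(K)=\kappa_n$. Set
\[
W(K):=\frac{1}{n\kappa_n}\int_{S^{n-1}}h_K\,\dd u,\qquad m:=\min_{S^{n-1}}h_K^{1-p}f_K,\qquad M:=\max_{S^{n-1}}h_K^{1-p}f_K,
\]
so $0<m\le M$ and $\mathcal{R}_p(K)=M/m$. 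Urysohn's inequality gives $W(K)\ge(V(K)/\kappa_n)^{1/n}=1$, so it is enough to prove $W(K)\le\mathcal{R}_p(K)^{1/p}$.

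Two classical facts do this. First, the volume formula $V(K)=\frac1n\int_{S^{n-1}}h_Kf_K\,\dd u=\frac1n\int_{S^{n-1}}h_K^{\,p}\,(h_K^{1-p}f_K)\,\dd u$; since $h_K^{1-p}f_K\ge m$ pointwise and $V(K)=\kappa_n$, this yields $\frac{1}{n\kappa_n}\int_{S^{n-1}}h_K^{\,p}\,\dd u\le 1/m$. Second, the $L_p$-Minkowski inequality for $p\ge1$ applied to the pair $(K,B)$ (again using $V(K)=V(B)=\kappa_n$) gives $\int_{S^{n-1}}h_K^{1-p}f_K\,\dd u\ge n\kappa_n$, hence $M\ge\frac{1}{n\kappa_n}\int_{S^{n-1}}h_K^{1-p}f_K\,\dd u\ge 1$. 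Combining these with the power-mean inequality (legitimate because $p\ge1$),
\[
W(K)^{\,p}\le\frac{1}{n\kappa_n}\int_{S^{n-1}}h_K^{\,p}\,\dd u\le\frac1m\le\frac Mm=\mathcal{R}_p(K),
\]
where the last step uses $M\ge1$; thus $W(K)\le\mathcal{R}_p(K)^{1/p}$. Together with Urysohn's inequality this gives the deficit bound $0\le W(K)-1\le\mathcal{R}_p(K)^{1/p}-1$.

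It remains to invoke Segal's quantitative Urysohn inequality \cite{Seg12}: for a convex body of volume $\kappa_n$ one has $\mathcal{A}(K,B)\le C\,n^{5/2}\,(W(K)-1)^{1/2}$ with $C$ an absolute constant. Applying it to $\tilde K$ and using the deficit bound yields
\[
\mathcal{A}(\tilde K,B)\le C\,n^{2.5}\,(W(\tilde K)-1)^{1/2}\le C\,n^{2.5}\,\big(\mathcal{R}_p(K)^{1/p}-1\big)^{1/2},
\]
which is the assertion. Analytically the argument is just a chain of standard inequalities, so there is no real obstacle; the substantive input is Segal's estimate, whose polynomial dependence on $n$ (inherited from the Figalli--Maggi--Pratelli stability of the isoperimetric inequality) is exactly what produces the $n^{2.5}$ factor. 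The only things needing care are the normalizations in the volume formula and in $L_p$-Minkowski, and the fact that $p\ge1$ is used twice --- for the validity of $L_p$-Minkowski and for the direction of the power-mean inequality --- which is why the range $0\le p<1$ needs the separate treatment announced in the introduction.
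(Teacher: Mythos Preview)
Your proof is correct and follows essentially the same route as the paper's. The paper packages the two steps you separate (the volume identity yielding $\frac{1}{\omega_n}\int h_{\tilde K}^{\,p}\,d\sigma\le 1/m$, and the $L_p$-Minkowski inequality for the pair $(K,B)$ yielding $M\ge 1$) into a single chain of inequalities leading to $\frac{1}{\omega_n}\int h_{\tilde K}^{\,p}\,d\sigma\le M/m$, and then, exactly as you do, applies the power-mean inequality to bound the mean width and invokes Segal's quantitative Urysohn inequality.
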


Interestingly, the $L_{p+1}$-Minkowski inequality also allows us to prove the global stability for $0\leq p< 1$ in the class of origin-symmetric bodies in the $L^2$-distance. The $L^2$-distance of $K,L$ is defined by
\[\delta_2(K,L)=\left(\frac{1}{\omega_n}\int|h_K-h_L|^2d\sigma\right)^{\frac{1}{2}}.\]
Here $\sigma$ is the spherical Lebesgue measure on $S^{n-1},$ and $\omega_i$ is the surface area of the $i$-dimensional ball.
\begin{theorem}\label{0<lp<1}
Let $0\leq p< 1$ and $K\in\mathcal{F}^n$ be origin-symmetric. There exists an origin-centred ball $B_r$ with radius $1\leq r\leq \mathcal{R}_p(K)$, such that
\[ 
\delta_2(\tilde{K},B_r)\leq D(\tilde{K})\left(1-\mathcal{R}_{p}(K)^{-1}\right)^{\frac{1}{2}}.
\]
Here the diameter of $\tilde{K},$ $D(\tilde{K})$, satisfies the inequality
\[D(\tilde{K})\leq 2\left(\left(1+\left(\frac{4\omega_{n-1}}{\omega_n}\right)^{\frac{1}{2}}\right)\mathcal{R}_p(K)\right)^3.\]
\end{theorem}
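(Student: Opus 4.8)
The plan is to use scale invariance to reduce to $\tilde K$, extract two scalar consequences of the $L_{p+1}$-Minkowski inequality, prove a purely geometric bound on the radii of $\tilde K$ in terms of $\mathcal{R}_p$, and then convert the resulting $L_{p+1}$-Minkowski deficit into $\delta_2$ via a quantitative Jensen step. Since $\mathcal{R}_p$ and both sides of the asserted inequalities are scale invariant, I replace $K$ by $\tilde K$ and assume $V(K)=\kappa_n$. Write $h=h_K$, $f=f_K$, $\phi=h^{1-p}f$, $m=\min_{S^{n-1}}\phi$, $M=\max_{S^{n-1}}\phi$ (so $\mathcal{R}_p(K)=M/m$), and $r_+=\max h$, $r_-=\min h$; as $K$ is origin-symmetric these are the circum- and in-radius, so $B_{r_-}\subseteq K\subseteq B_{r_+}$. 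Put $q=p+1\in[1,2)$ and let $V_q$ denote the $L_q$-mixed volume, $V_q(K,L)=\frac{1}{n}\int h_L^{q}h_K^{1-q}\,dS(K,\cdot)$, with $V_q(K,K)=V(K)$.

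\emph{Step 1 (two scalar inputs).} Using $f=h^{p-1}\phi$ and the $L_q$-Minkowski inequality $V_q(K,B)\ge V(K)^{(n-q)/n}V(B)^{q/n}$ ($q\ge1$) gives $\int_{S^{n-1}}h^{-1}\phi\,d\sigma=nV_q(K,B)\ge n\kappa_n=\omega_n$, with equality only for $K=B$ (using origin-symmetry when $p=0$); and $nV(K)=\int h\,dS(K,\cdot)$ reads $\int_{S^{n-1}}h^{p}\phi\,d\sigma=\omega_n$. Since $B_{r_-}\subseteq K$, monotonicity of $V_q$ in the second slot and homogeneity give $r_-^{q}V_q(K,B)=V_q(K,B_{r_-})\le V_q(K,K)=V(K)$, so
\[
\varepsilon_0:=\frac{V_q(K,B)}{V(K)}-1=\frac{1}{\omega_n}\int_{S^{n-1}}(h^{-1}-h^{p})\phi\,d\sigma\ \in\ \bigl[\,0,\ r_-^{-(p+1)}-1\,\bigr];
\]
thus $\varepsilon_0$ is small once $r_-\approx1$. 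Also, comparing the sign of $\nabla^2h$ at the extrema of $h$ in $\det(\nabla^2h+gh)=h^{p-1}\phi$ gives $r_+\ge m^{1/(n-p)}$ and $r_-\le M^{1/(n-p)}$.

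\emph{Step 2 (radii bound — the crux).} Here one must prove $D(K)=2r_+\le 2\bigl((1+(4\omega_{n-1}/\omega_n)^{1/2})\mathcal{R}_p\bigr)^{3}$, together with a lower bound $r_-\ge\rho(\mathcal{R}_p)$ with $\rho(\mathcal{R}_p)\to1$ as $\mathcal{R}_p\to1$. The idea is that the pinching forces $K$ to be nearly round: at $y_0\in\partial K$ with $|y_0|=r_+$ and normal $u_0$ one has $\mathcal{K}_K(y_0)=h(u_0)^{1-p}/\phi(u_0)\ge r_+^{1-p}/M$, so $K$ curves away from $\pm y_0$ at a rate governed by $r_+$ and $M$, while $V(K)=\kappa_n$ forces the central slice of $K$ orthogonal to $y_0$ — over which $K$ contains the bipyramid with apexes $\pm y_0$ — to have $(n-1)$-volume at most $\omega_n/(2r_+)$. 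Playing these against each other and against $r_+\ge m^{1/(n-p)}$, $r_-\le M^{1/(n-p)}$, and iterating to absorb the losses, should produce the cubic bound, the constant $(4\omega_{n-1}/\omega_n)^{1/2}$ emerging from the $(n-1)$-dimensional slice comparison. I expect this to be the main obstacle: the difficulty is securing the dimension-free power $\mathcal{R}_p^{3}$ with the stated constant, rather than the cruder, dimension- and $m$-dependent bound that a naive spherical-cap estimate yields.

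\emph{Step 3 (from $\varepsilon_0$ to $\delta_2$).} Write $\frac{V_q(K,B)}{V(K)}=\int_{S^{n-1}}(h^{-1})^{q}\,d\bar V_K$ with $d\bar V_K=\frac{1}{nV(K)}h\,dS(K,\cdot)=\frac{1}{\omega_n}h^{p}\phi\,d\sigma$ a probability measure; then $1+\varepsilon_0=\int(h^{-1})^{q}\,d\bar V_K$ while $\int h^{-1}\,d\bar V_K=\frac{1}{\omega_n}\int f\,d\sigma=S(K)/\omega_n\ge1$ by isoperimetry. As $t\mapsto t^{q}$ is convex with a quantitative gap on the range $[r_+^{-1},r_-^{-1}]$ of $h^{-1}$ (bounded by Step 2), Jensen yields
\[
\mathrm{Var}_{\bar V_K}(h^{-1})\ \lesssim\ (1+\varepsilon_0)-\bigl(S(K)/\omega_n\bigr)^{q}\ \le\ \varepsilon_0 .
\]
Since $d\bar V_K$ has density $h^{p}\phi/\omega_n$ squeezed between positive constants (Step 2 again) and $|h^{-1}-r^{-1}|\ge|h-r|/r_+^{2}$, this variance dominates a constant multiple of $\int_{S^{n-1}}(h-r)^{2}\,d\sigma$ for a suitable radius $r$; choosing $r$ with $r^{\,n-p}$ between $m$ and $M$ — so $1\le r\le\mathcal{R}_p$ after the volume normalization — and inserting $\varepsilon_0\le r_-^{-(p+1)}-1\lesssim\mathcal{R}_p-1=\mathcal{R}_p(1-\mathcal{R}_p^{-1})$ from Steps 1--2, one gets $\delta_2(\tilde K,B_r)^{2}\lesssim\mathrm{poly}(\mathcal{R}_p)\,(1-\mathcal{R}_p^{-1})$, and the polynomial in $\mathcal{R}_p$ is then absorbed into $D(\tilde K)^{2}$ using the diameter bound of Step 2. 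Tracking constants so that the final factor is exactly $D(\tilde K)$ — the point where the precise exponents in the statement must be matched — finishes the proof.
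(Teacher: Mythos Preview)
Your Step~2 is a genuine gap, and the curvature-based mechanism you sketch there is not how the paper obtains either the diameter bound or the lower radius control. The paper's key move is to \emph{eliminate} $\phi$ at the outset rather than carry it through: from $m\le\phi\le M$, the $L_{p+1}$-Minkowski inequality together with the inequality $\mathcal{E}_p(\tilde K)\ge 1$ (for $p>0$; trivial for $p=0$) gives directly
\[
\mathcal{E}_{-1}(\tilde K)=\frac{1}{\omega_n}\int h_{\tilde K}^{-1}\,d\sigma\ \ge\ \frac{m}{M}=\mathcal{R}_p(K)^{-1}.
\]
Everything after this point involves only the support function of $\tilde K$, not its curvature. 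The $\delta_2$-bound then comes from the exact identity
\[
\frac{\int h^{-1}d\sigma}{\bigl(\int h^{-2}d\sigma\bigr)^{1/2}\omega_n^{1/2}}
=1-\frac{1}{2}\left|\frac{h^{-1}}{\bigl(\int h^{-2}d\sigma\bigr)^{1/2}}-\omega_n^{-1/2}\right|_{L^2}^{2},
\]
combined with the Blaschke--Santal\'o inequality (which, via H\"older, yields $\int h_{\tilde K}^{-2}d\sigma\le\omega_n$). The diameter bound is obtained by the same two ingredients, not by curvature: with $R=\max h_{\tilde K}$ one splits the sphere at the level set $\{h_{\tilde K}\le R^{1/3}\}$, uses $\int h_{\tilde K}^{-2}\le\omega_n$ on that set, and bounds its measure via the convexity inequality $h_{\tilde K}(v)\ge R|u\cdot v|$; the factor $(4\omega_{n-1}/\omega_n)^{1/2}$ is exactly the spherical-cap constant from that computation.

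Your route, by contrast, keeps $\phi$ inside the integrals and tries to control the radii through pointwise curvature comparisons at the extreme points of $h$. This forces you to prove a quantitative two-sided radius bound $r_-\ge\rho(\mathcal{R}_p)$ before you can even say $\varepsilon_0$ is small, and your Step~3 conversion from $\mathrm{Var}_{\bar V_K}(h^{-1})$ to $\delta_2$ with respect to $d\sigma$ introduces a density $h^{p}\phi$ whose bounds again depend on $r_\pm$ and on $m,M$ separately --- so the constants pile up and it is unclear you can land on the exact coefficient $D(\tilde K)$ with no further factor. The paper avoids all of this by passing to the unweighted quantity $\mathcal{E}_{-1}(\tilde K)$ first and then invoking Blaschke--Santal\'o, which is the missing idea in your outline.
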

For $p\in (-n,0)$, we also establish a local stability result. The points $e_p$ will be defined in \Cref{def functionals}. 

\begin{theorem}\label{lp}
Let $p\in(-n,0).$ There exist positive constants $\gamma,\, \delta$, depending only on $n,\, p$ with the following property. If $K\in\mathcal{F}_0^n$
with $e_p(K)=0$ satisfies $|h_{\lambda K}-1|_{C^{3}}\leq \delta$ for some $\lambda>0$, then
\[\delta_{2}(\tilde{K},B)\leq \gamma \left(\mathcal{R}_{p}(K)-1\right).\]
\end{theorem}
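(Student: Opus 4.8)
The plan is to linearise the map $h\mapsto h^{1-p}f$ at the constant function $1$ and invert the linearisation on the orthogonal complement of the spherical harmonics of degree $\leq 1$, where for $p\in(-n,0)$ it is coercive. Begin with a routine normalisation: $\mathcal{R}_p$, the condition $e_p=0$, and the body $\tilde K$ are all invariant under dilations of $K$, while $|h_{\lambda K}-1|_{C^3}\leq\delta$ forces $|V(\lambda K)-\kappa_n|\leq C_n\delta$ and hence $|h_{\tilde K}-1|_{C^3}\leq C_n\delta$; so, after replacing $\delta$ by $\delta/C_n$, I may assume $K=\tilde K$, i.e. $V(K)=\kappa_n$, and set $\psi:=h_K-1$, so $|\psi|_{C^3}\leq\delta$. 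Expanding the determinant in $f_K=\det(\nabla^2_{ij}\psi+g_{ij}(1+\psi))/\det(g_{ij})$ through its elementary symmetric functions and $(1+\psi)^{1-p}$ through the binomial series gives
\[
h_K^{1-p}f_K=1+\mathcal{L}\psi+\mathcal{Q}(\psi),\qquad \mathcal{L}\psi:=\Delta\psi+(n-p)\psi,
\]
where $\mathcal{Q}(\psi)$ is a finite sum of terms, each at least quadratic in $(\psi,\nabla\psi,\nabla^2\psi)$ with coefficients bounded, together with their derivatives, near $\psi=0$.

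Split $\psi=a+\ell+\eta$ into its $L^2(S^{n-1})$-orthogonal components of spherical-harmonic degree $0$, $1$, and $\geq 2$. On degree-$k$ harmonics $\mathcal{L}$ acts as multiplication by $(n-p)-k(k+n-2)$, so it equals $n-p>0$ on constants, $1-p>0$ on degree $1$, and on the degree-$\geq 2$ subspace it is an isomorphism whose top eigenvalue is $-(n+p)<0$. This is exactly where the hypothesis $p>-n$ enters: at $p=-n$ the degree-$2$ eigenvalue vanishes, which matches the fact that the $L_{-n}$-curvature is constant on every origin-symmetric ellipsoid. Combining $\|\eta\|_{L^2}\leq(n+p)^{-1}\|\mathcal{L}\eta\|_{L^2}$ with the elliptic estimate $\|\eta\|_{W^{2,2}}\leq C_n(\|\Delta\eta\|_{L^2}+\|\eta\|_{L^2})$ on $S^{n-1}$ yields $\|\eta\|_{W^{2,2}}\leq C_{n,p}\|\mathcal{L}\eta\|_{L^2}$.

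Next I estimate the three components; write $G:=h_K^{1-p}f_K$. Bounding each monomial of $\mathcal{Q}$ by placing the sup-norm on all of its factors but one (each such factor being $\leq\delta$ by the $C^2$-bound on $\psi$) and the $L^2$-norm on the remaining one gives the key inequality
\[
\|\mathcal{Q}(\psi)\|_{L^2}\leq C_{n,p}\,\delta\,\|\psi\|_{W^{2,2}}.
\]
The volume constraint $\kappa_n=V(K)=\tfrac1n\int h_Kf_K\,d\sigma$, together with $\int\Delta\psi\,d\sigma=0$, gives $\int\psi\,d\sigma=-\tfrac1n\int\mathcal{Q}(\psi)\,d\sigma$, hence $|a|\leq C_{n,p}\,\delta\,\|\psi\|_{W^{2,2}}$. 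Linearising $e_p(K)=0$ in the same fashion, its leading term is a nonzero multiple of the degree-$1$ Fourier coefficient of $\psi$, so $\|\ell\|_{L^2}\leq C_{n,p}\,\delta\,\|\psi\|_{W^{2,2}}$ (alternatively, since $\mathcal{L}=1-p\neq 0$ on degree $1$, one can control $\ell$ directly by $\mathcal{R}_p(K)-1$). For $\eta$, projecting the displayed identity onto degrees $\geq 2$ removes $1$, $a$ and $\ell$, so $\mathcal{L}\eta=P_{\geq 2}G-P_{\geq 2}\mathcal{Q}(\psi)$, where $P_{\geq 2}$ is the projection onto spherical harmonics of degree $\geq 2$. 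Since $G$ takes values in $[\min G,\max G]$ with $\max G-\min G=(\mathcal{R}_p(K)-1)\min G$ and $\min G\leq 1+C_{n,p}\delta$,
\[
\|P_{\geq 2}G\|_{L^2}\leq\|G-\overline{G}\|_{L^2}\leq\sqrt{\omega_n}\,(\max G-\min G)\leq 2\sqrt{\omega_n}\,(\mathcal{R}_p(K)-1),
\]
so $\|\mathcal{L}\eta\|_{L^2}\leq C_{n,p}(\mathcal{R}_p(K)-1)+C_{n,p}\,\delta\,\|\psi\|_{W^{2,2}}$. Using that all norms are comparable on the finite-dimensional degree-$\leq 1$ space,
\[
\|\psi\|_{W^{2,2}}\leq C_n\big(|a|+\|\ell\|_{L^2}\big)+\|\eta\|_{W^{2,2}}\leq C_{n,p}(\mathcal{R}_p(K)-1)+C_{n,p}\,\delta\,\|\psi\|_{W^{2,2}};
\]
choosing $\delta=\delta(n,p)$ small enough to absorb the last term gives $\|\psi\|_{W^{2,2}}\leq C_{n,p}(\mathcal{R}_p(K)-1)$, and therefore $\delta_2(\tilde K,B)=\omega_n^{-1/2}\|\psi\|_{L^2}\leq\gamma(\mathcal{R}_p(K)-1)$ with $\gamma=\gamma(n,p)$.

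The hardest part is handling the nonlinear remainder $\mathcal{Q}(\psi)$ correctly. All that $\mathcal{R}_p(K)-1$ provides is oscillation (sup-norm) control of $G$, and there is no a priori comparison between $\|\psi\|_{C^2}$ and $\mathcal{R}_p(K)-1$; so estimating $\mathcal{Q}$ crudely by $C\delta^2$ is useless, and one must instead use that $\mathcal{Q}$ is at least quadratic to bound it by $C\delta\|\psi\|_{W^{2,2}}$ so that it reabsorbs into the left-hand side --- this is precisely the role of the $C^3$-smallness hypothesis. The second delicate point is spectral and explains the hypotheses: only after the volume normalisation built into $\tilde K$ has pinned the degree-$0$ mode and $e_p(K)=0$ the degree-$1$ mode does $\mathcal{L}$ become an isomorphism with a dimension-controlled inverse on the surviving modes, and this coercivity degenerates exactly at the endpoint $p=-n$.
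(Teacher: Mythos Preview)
Your proof is correct, but the route is genuinely different from the paper's. The paper does not linearise the curvature map at all. Instead it introduces the scale--invariant width functional
\[
\tilde{\mathcal{E}}_p(h_L)=\Big(\int h_L^{p}\,d\sigma\Big)^{n/p}\!\big/ V(L),
\]
observes that its $L^2$-gradient is proportional to $\dfrac{nV(K)}{\int h_K^{p}\,d\sigma}-h_K^{1-p}f_K$, and then applies the {\L}ojasiewicz--Simon gradient inequality (citing \cite{Sim96}) to obtain $|\tilde{\mathcal{E}}_p(K)-\tilde{\mathcal{E}}_p(B)|^{1/2}\le c_2\|\operatorname{grad}\tilde{\mathcal{E}}_p(h_K)\|_{L^2}$. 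The curvature pinching $m\le h_K^{1-p}f_K\le M$ bounds the right-hand side by $C(\mathcal{R}_p-1)$, so $\mathcal{E}_p(\tilde K)\ge 1-C(\mathcal{R}_p-1)^2$, and the paper finishes by invoking its Theorem~\ref{thm: stability p}\,(2), the stability of the $L_p$-width (proved via Blaschke--Santal\'o and Aldaz's quantitative H\"older inequality), which converts this into a $\delta_2$ bound of order $\sqrt{\varepsilon}=C(\mathcal{R}_p-1)$.

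Your argument is more elementary and self-contained: it avoids both the {\L}ojasiewicz--Simon black box and the Blaschke--Santal\'o/width-stability machinery, replacing them by an explicit spherical-harmonic inversion of $\mathcal L=\Delta+(n-p)$ together with a quadratic-remainder absorption. This makes the spectral role of the endpoint $p=-n$ (degree-$2$ eigenvalue $-(n+p)$) completely transparent, and in fact uses only $C^2$-closeness of $h_K$ to $1$. The paper's approach, on the other hand, is reused verbatim for Theorem~\ref{nd-affine}, where the linearisation has a large kernel (the $SL(n)$-orbit of the ball) and the {\L}ojasiewicz--Simon framework handles that degeneracy automatically; your direct method would need extra work there. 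One minor remark: your parenthetical alternative for the degree-$1$ mode---controlling $\ell$ directly through $\mathcal R_p$ via the nonzero eigenvalue $1-p$---actually dispenses with the hypothesis $e_p(K)=0$ altogether, which is a small sharpening the paper does not state.
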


 \begin{remark}
For the case $p=0$, some progress recently has been made on the stability of the cone-volume measure by B{\"{o}}r{\"{o}}czky and De in \cite{BD2021} based on the logarithmic Minkowski inequality in the class of convex bodies with many symmetries proved by B{\"{o}}r{\"{o}}czky and Kalantzopoulos in \cite{BK2021}. Although our proof of \Cref{0<lp<1} is independent of the existence of $L_p$-Minkowski inequality for $0\leq p<1$, it is worth pointing out that such an inequality exists in some particular cases: $p\in [0,1)$ and in the class of origin-symmetric convex bodies in the plane, or in any dimension and in the class of origin-symmetric bodies for $p\in (p_0,1)$ where $p_0>0$ is some constant depending on $n$; see \cites{CYLL20,KM17, Mil21,Mil21b}.
 \end{remark}

Let $K\in \mathcal{F}^n_0$. The centro-affine curvature of $K$, $H_K$, is defined by
\[H_K:=(h_K^{n+1}f_K)^{-1}.\]
It is known that $H_K(u)$ is (up to a constant) just a power of the volume of the origin-centred ellipsoid touching $K$ at $\nu_K^{-1}(u)$ of second-order (osculating ellipsoid), and thus is an $SL(n)$ covariant notion.
In particular, one of the key properties of the centro-affine curvature is that $\min H_K$ and $\max H_K$ are invariant under special linear transformation $SL(n)$. That is,
\begin{equation}\label{eq: sln invariant property}
\min_{S^{n-1}} H_K=\min_{S^{n-1}} H_{\ell K},~\max_{S^{n-1}} H_K=\max_{S^{n-1}} H_{\ell K},\quad \forall\ell\in SL(n).
\end{equation}

A remarkable theorem of Pogorelov states that if the centro-affine curvature of a smooth, strictly convex body is constant, then the body is an origin-centred ellipsoid; cf. \cite{Gut}*{Thm. 10.5.1}, \cites{calabi,CH,MdP14}. It is of great interest to find a stability version of this statement, for example, in the Banach-Mazur distance $d_{\mathcal{BM}}$. For two convex bodies $K,L$, $d_{\mathcal{BM}}(K,L)$ is defined by
\begin{multline}
\min\{ \lambda\geq 1: (K-x)\subseteq \ell(L-y)\subseteq \lambda(K-x),
\ell\in GL(n),~ x,y\in\mathbb{R}^n\}.\nonumber
\end{multline}

\begin{qu}\label{stability q}
Is there an increasing function $f$ with $\lim\limits_{\varepsilon\to 0}f(\varepsilon)=0$ with the following property? If $K\in\mathcal{F}_0^n$ satisfies
\[\mathcal{R}_{-n}(K)=\frac{\max H_K}{\min H_K}\leq 1+\varepsilon,\]
then $K$ is $f(\varepsilon)$-close to an ellipsoid in the Banach-Mazur distance.
\end{qu}

The following theorem gives a positive answer to this question in the plane under no additional assumption.
\begin{theorem} \label{2d-affine}
There exist $\gamma,\,\delta>0$ with the following property. If $K\in\mathcal{F}_0^2$ satisfies $\mathcal{R}_{-2}(K)\leq1+\delta,$
then we have
\[(d_{\mathcal{BM}}(K,B)-1)^4\leq \gamma \left(\mathcal{R}_{-2}(K)-1\right).\]
If $K$ has its Santal\'{o} point at the origin, then
\[(d_{\mathcal{BM}}(K,B)-1)^4\leq \gamma (\sqrt{\mathcal{R}_{-2}(K)}-1).\]
Moreover, if $K$ is origin-symmetric, then
\[d_{\mathcal{BM}}(K,B)\leq \sqrt{\mathcal{R}_{-2}(K)}.\]
In this case, we may allow $\delta=\infty.$
\end{theorem}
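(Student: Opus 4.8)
The plan is to exploit the ordinary differential equation satisfied by the support function in the plane together with its first integral, the $GL(2)$-invariance recorded in \eqref{eq: sln invariant property}, and classical affine inequalities, thereby stabilising Pogorelov's rigidity theorem. Parametrising $\partial K$ by the angle $\theta$ of the outer normal one has $f_K=h_K''+h_K$, hence $H_K^{-1}=h_K^3(h_K''+h_K)$; writing $h:=h_K$ this is $h''+h=\psi h^{-3}$ with $\psi:=H_K^{-1}$ taking values in the interval $[\min_{S^1}H_K^{-1},\max_{S^1}H_K^{-1}]$, whose endpoints have ratio $\mathcal{R}_{-2}(K)$. Multiplying by $h'$ and integrating shows that
\[ E:=(h_K')^2+h_K^2+H_K^{-1}h_K^{-2} \]
satisfies $E'=(H_K^{-1})'\,h_K^{-2}$, so $E$ is constant exactly when $H_K$ is, in which case a phase-plane analysis forces $h_K^2$ to be the restriction to $S^1$ of a positive definite quadratic form, i.e. $K$ is an origin-centred ellipse — this recovers Pogorelov's theorem in the plane. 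Dually, this says that $\partial K$ is second-order tangent at each point to an origin-centred ellipse of area $\asymp H_K(\theta)^{-1/2}$, so these ellipses have areas in a ratio at most $\sqrt{\mathcal{R}_{-2}(K)}$; the task throughout is to turn ``$h_K$ is a near-solution of the rigid equation'' into a quantitative bound on $d_{\mathcal{BM}}(K,B)$.

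For the origin-symmetric statement I would first use the $SL(2)$-freedom (legitimate since $\min H_K$, $\max H_K$, hence $\mathcal{R}_{-2}$ and $d_{\mathcal{BM}}(K,B)$, are $GL(2)$-invariant) to place $K$ in the position minimising the ratio $R/r$ of the circumradius to the inradius about the origin, so that $R/r$ already equals $d_{\mathcal{BM}}(K,B)$, and rescale so that $Rr=1$. A first-variation (John-type) argument should show that in this position $h_K$ attains its maximum at two orthogonal normals and its minimum at two others; then, integrating the equation on the arc joining a maximising normal to a neighbouring minimising one and comparing with the extremal solutions $\psi\equiv\min H_K^{-1}$ and $\psi\equiv\max H_K^{-1}$, I expect to obtain $r^2\ge\min_{S^1}H_K^{-1}$ and $R^2\le\max_{S^1}H_K^{-1}$, whence $d_{\mathcal{BM}}(K,B)^2=R^2/r^2\le\mathcal{R}_{-2}(K)$. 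Since this comparison is non-perturbative it needs no smallness hypothesis (hence $\delta=\infty$), and inspecting equality cases shows $d_{\mathcal{BM}}(K,B)=\sqrt{\mathcal{R}_{-2}(K)}$ forces $K$ to be an origin-centred ellipse.

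For the Santal\'o-point case I would use the identities $2V(K)=\int_{S^1}H_K^{-1}h_K^{-2}\dd\theta$ and $2V(K^*)=\int_{S^1}h_K^{-2}\dd\theta$, so that $V(K)/V(K^*)$ is an $h_K^{-2}$-weighted average of $H_K^{-1}$ and is therefore close to $1$ (after rescaling) when $\mathcal{R}_{-2}(K)$ is; inserting this into the Hölder chain $\Omega(K)^3\le 8V(K)^2V(K^*)\le 8\pi^2V(K)$ — whose two steps are near-equalities, the first because $H_K^{-1}$ is nearly constant and the second by Blaschke--Santal\'o with the origin at the Santal\'o point — controls the affine isoperimetric deficit of $K$ by a power of $\mathcal{R}_{-2}(K)-1$, after which a stability form of the affine isoperimetric inequality converts this into a bound on $d_{\mathcal{BM}}(K,B)-1$; the bookkeeping should produce the stated fourth power (and $\sqrt{\mathcal{R}_{-2}}-1$ and $\mathcal{R}_{-2}-1$ are comparable near $1$). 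For a general $K$ I would first use the differential equation and the smallness of $\mathcal{R}_{-2}(K)-1$ to locate an approximate centre — the near-solution $h_K$ being close to a quadratic form, the Santal\'o point of $K$ is close to the centre of the associated ellipse — then translate by the Santal\'o point and apply an element of $GL(2)$ to reduce to the previous case, changing $\mathcal{R}_{-2}$ by only a comparable amount.

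The hard part will be the quantitative step from ``$H_K^{-1}$ lies within a factor $1+\varepsilon$ of a constant'' to ``$h_K$ is $O(\varepsilon^{1/2})$-close to a quadratic form'': the drift of $E$ is $\int(H_K^{-1})'h_K^{-2}$, and $\mathcal{R}_{-2}(K)$ bounds only the oscillation — not the total variation, let alone higher derivatives — of $H_K^{-1}$, so the estimate must be anchored at the critical points of $H_K^{-1}$ (where $E'=0$) rather than at any pointwise bound on $E'$, and one must simultaneously prevent the body from degenerating so that the final comparison with the $L^\infty$-type quantity $d_{\mathcal{BM}}$ stays under control. This tension, together with the quantitative handling of the translation to the Santal\'o point, is where the loss of powers in the non-symmetric statements comes from; the origin-symmetric case escapes it because the monotone ODE comparison in the optimal position is exact.
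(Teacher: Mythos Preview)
Your route is genuinely different from the paper's, and the main line has a real gap. The paper does not work with the ODE or with affine-isoperimetric stability at all: it simply shows that the volume product $V(K)V(K^{s})$ is bounded below by $\pi^{2}m/M$ (general case) or $\pi^{2}\sqrt{m/M}$ (Santal\'o case), and then invokes the planar Blaschke--Santal\'o stability of \cite{I} to convert a small volume-product deficit into a Banach--Mazur bound; the origin-symmetric inequality is literally \cite{I}*{Cor.~4}. The key input you are missing is the lower bound on $V(K)$ (and, via the curvature duality $H_{K}H_{K^{\ast}}=1$, on $V(K^{s})$): the paper obtains $V(K)\ge \pi/\sqrt{M}$ from Andrews' lemma \cite{An1}*{Lem.~10}, which says that after normalising $V=\pi$ the centro-affine curvature attains the value $1$ somewhere. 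Without a device of this kind, near-constancy of $H_K$ controls the \emph{ratio} $V(K)/V(K^{\ast})$ (your weighted-average remark) and the H\"older deficit $\Omega^{3}\approx 8V^{2}V^{\ast}$, but not the \emph{product} $V V^{\ast}$; hence your chain ``$\Omega^{3}\le 8V^{2}V^{\ast}\le 8\pi^{2}V$ with both steps near-equalities'' breaks at the second step, and the affine-isoperimetric deficit is not bounded.

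Two further issues. Your treatment of the general case by translating to the Santal\'o point and asserting that $\mathcal{R}_{-2}$ changes ``by only a comparable amount'' is circular: $H_K$ is highly sensitive to the choice of origin (it is $SL(2)$- but not translation-invariant), and controlling this change is essentially as hard as the theorem itself; the paper avoids this entirely by bounding $V V^{s}$ directly with the origin wherever it happens to be. For the origin-symmetric case your proposed ODE comparison is an interesting alternative to citing \cite{I}, but the asserted inequalities $r^{2}\ge \min H_K^{-1}$ and $R^{2}\le \max H_K^{-1}$ in the optimal $SL(2)$-position do not follow from the energy argument you sketch (the drift $E'=(H_K^{-1})'h^{-2}$ involves the variation of $H_K^{-1}$, not just its range), and the John-position first-variation does not by itself pin the extrema of $h$ at orthogonal directions in the way you need.
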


In higher dimensions, we have the following `local' stability result.

\begin{theorem}\label{nd-affine}
There exist positive numbers $\gamma,\,\delta$, depending only on $n$ with the following property. Suppose $K\in\mathcal{F}_0^n$
has its Santal\'{o} point at the origin, and for some $\ell\in GL(n)$ we have $|h_{\ell K}-1|_{C^{3}}\leq \delta$. Then
\[d_{\mathcal{BM}}(K,B)\leq \gamma \left(\mathcal{R}_{-n}(K)-1\right)^{\frac{1}{3(n+1)}}+1.\]
\end{theorem}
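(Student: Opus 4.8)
The plan is to reduce the higher-dimensional affine stability statement to the local Euclidean stability result that the machinery of Theorem \ref{lp} should be providing, exploiting the $SL(n)$-invariance recorded in \eqref{eq: sln invariant property}. First I would observe that $\mathcal{R}_{-n}$ is an $SL(n)$-invariant: since $\min H_K$ and $\max H_K$ are unchanged under $SL(n)$ by \eqref{eq: sln invariant property}, and $\mathcal{R}_{-n}(K)=\max H_K/\min H_K$, we get $\mathcal{R}_{-n}(\ell K)=\mathcal{R}_{-n}(K)$ for all $\ell\in GL(n)$ (the extra scaling in $GL(n)$ versus $SL(n)$ only dilates $H_K$ uniformly and cancels in the ratio). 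Likewise the Banach--Mazur distance $d_{\mathcal{BM}}(K,B)$ is $GL(n)$-invariant in the first slot. So without loss of generality we may replace $K$ by $\ell K$ and assume from the outset that $|h_K-1|_{C^3}\le\delta$, i.e. $K$ is a $C^3$-small perturbation of $B$.

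Next I would set up the linearization. Write $h_K=1+u$ with $|u|_{C^3}\le\delta$. The centro-affine curvature is $H_K=(h_K^{n+1}f_K)^{-1}$, and $f_K=\det(\nabla^2 h_K+h_K g)/\det g$; expanding to first order in $u$ gives $h_K^{n+1}f_K = 1+(n+1)u+(\Delta u+(n-1)u)+O(|u|_{C^2}^2) = 1+\Delta u+2n u+O(\delta|u|_{C^2})$ on $S^{n-1}$, so $\log H_K = -(\Delta u + 2n u)+O(\delta|u|_{C^2})$. The hypothesis $\mathcal{R}_{-n}(K)\le 1+\varepsilon$ with $\varepsilon:=\mathcal{R}_{-n}(K)-1$ forces the oscillation of $\log H_K$ to be at most $\log(1+\varepsilon)\le\varepsilon$; hence $\|\Delta u+2n u - c\|_{L^\infty}\lesssim \varepsilon + \delta|u|_{C^2}$ for the mean value $c$. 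The operator $L:=\Delta+2n$ on $S^{n-1}$ has kernel spanned by the first spherical harmonics (eigenvalue $-(n-1)$ of $\Delta$... wait: $\Delta$ has eigenvalue $-n$ on degree-$1$ harmonics, not matching $2n$; in fact the relevant degeneracy here is that $\ker L$ is trivial and $L$ is invertible on the orthogonal complement of constants) — more precisely, I would split off the constant and degree-one parts of $u$ (the latter being a translation, the former a dilation, both absorbed by the definition of $d_{\mathcal{BM}}(K,B)$ which quotients by $GL(n)$ and translations), and use elliptic estimates for $L$ restricted to the remaining harmonics to bound $\|u-(\text{const}+\text{linear})\|_{H^2}$, hence by Sobolev and the $C^3$ bound also a Hölder norm, by $C(\varepsilon+\delta\,\text{osc})$. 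Choosing $\delta$ small enough to absorb the quadratic term yields $\|u - \text{(affine part)}\|_{C^0}\lesssim\varepsilon$. Since an affine perturbation of $h_B$ corresponds to applying an element of $GL(n)$ close to the identity together with a translation, this gives a $GL(n)$ image $\ell' K$ with $|h_{\ell'K}-1|_{C^0}\lesssim\varepsilon$, and then $d_{\mathcal{BM}}(K,B)\le d_{\mathcal{BM}}(\ell'K,B)\le 1+C\varepsilon$, which is even stronger than the claimed exponent $\tfrac{1}{3(n+1)}$.

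The main obstacle is that this naive linearization is \emph{too} optimistic: near the ball, constant centro-affine curvature has a large kernel — the full $(n^2+n)/2$-dimensional family of origin-centred ellipsoids — and the second spherical harmonics of $u$ correspond exactly to infinitesimal such ellipsoids. These are precisely the modes on which $L=\Delta+2n$ is \emph{not} invertible (degree-two harmonics have $\Delta$-eigenvalue $-2(n+1)$... one must check the exact constant, but the point is that the linearized operator degenerates there because ellipsoids form a genuine kernel of the nonlinear problem, not just the linear one). Consequently the linear estimate controls $u$ only \emph{modulo second harmonics}, and one must argue that the degree-two part of $u$ is itself small — but it need not be: $K$ could be a genuinely non-spherical ellipsoid, which is exactly why the conclusion is stated in Banach--Mazur distance (where ellipsoids are identified with $B$) rather than in $\delta_2$. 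So the real argument must: (i) first use an $SL(n)$ map to put $K$ in a normalized position relative to its \emph{John ellipsoid} or, given the Santaló-point hypothesis, relative to the ellipsoid determined by the second-order behaviour at the Santaló point, killing the degree-two part of $u$ up to error $O(\varepsilon^{\alpha})$; then (ii) re-run the elliptic estimate on the complement of $\le 2$ harmonics, where $L$ is genuinely invertible with spectral gap, to control the higher modes by $C\varepsilon$; and (iii) track how the normalization in step (i) degrades the power of $\varepsilon$ — this is where the exponent $\tfrac{1}{3(n+1)}$ rather than $1$ enters, presumably from an inverse-function-type or iteration argument balancing the $\delta$-dependent quadratic error against the diameter/regularity bounds, analogous to how the exponent $\tfrac14$ arises in Theorem \ref{2d-affine}. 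Quantifying step (i) — showing that almost-constant centro-affine curvature pins the osculating ellipsoid at the Santaló point to within a controlled distortion, and that the corresponding $SL(n)$ map does not blow up the relevant norms — is the technical heart, and the Santaló-point hypothesis is what makes it canonical and $\delta$ the smallness needed to keep everything in the perturbative regime.
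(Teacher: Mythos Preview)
Your reduction in the first paragraph (passing to $\ell K$ via $GL(n)$-invariance of both $\mathcal{R}_{-n}$ and $d_{\mathcal{BM}}$, and noting that the Santal\'o point is preserved) is correct and is exactly what the paper does. After that, however, the paper takes a completely different route, and your direct elliptic approach has a real gap.

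The paper does \emph{not} linearize $H_K$ or try to invert $\Delta+2n$ on harmonics. Instead it introduces the volume-product functional $\mathcal{P}(L)=1/(V(L)V(L^{\ast}))$, computes $(\operatorname{grad}\mathcal{P})(h_K)=V(K^{\ast})\mathcal{P}(K)^2 h_K^{-n-1}\bigl(V(K)/V(K^{\ast})-H_K^{-1}\bigr)$, and applies the \L ojasiewicz--Simon gradient inequality to $\mathcal{P}$ at the critical point $h_B\equiv 1$. Because the kernel of the Hessian of $\mathcal{P}$ at $B$ is exactly the tangent space to the ellipsoid family (your degree-two harmonics) and $\mathcal{P}$ is constant along that family, one can take \L ojasiewicz exponent $\alpha=1/2$; this yields $1-V(K)V(K^{\ast})/V(B)^2\le c(\mathcal{R}_{-n}(K)-1)^2$. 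The exponent $\tfrac{1}{3(n+1)}$ then comes \emph{entirely} from B\"or\"oczky's stability of the Blaschke--Santal\'o inequality, not from any iteration or inverse-function balancing as you conjecture.

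The gap in your approach is precisely where you suspect it. Your error term is $O(\delta|u|_{C^2})$, and $|u|_{C^2}$ is a priori only $\le\delta$, not $\lesssim\varepsilon$; the degree-two component $u_2$ lies in $\ker(\Delta+2n)$ and can be of size $\delta$, so the quadratic remainder is genuinely $O(\delta^2)$ and cannot be absorbed into $C\varepsilon$ by shrinking $\delta$. Your proposed fix --- pre-normalize by an $SL(n)$ map to kill $u_2$, then bootstrap --- is the right instinct, but you have not carried it out: the normalizing map is itself only determined up to $O(\delta^2)$, so after one pass the new $u_2$ is $O(\delta^2)$ rather than zero, and iterating this is exactly the content of the \L ojasiewicz--Simon machinery. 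In other words, what you are trying to build by hand is the tool the paper invokes as a black box, applied not to the curvature equation directly but to the scalar functional $\mathcal{P}$ whose critical set is the ellipsoid orbit. Passing through the volume product also explains why the Santal\'o-point hypothesis is needed (so that $K^{\ast}=K^{s}$ and Blaschke--Santal\'o applies), a point your sketch leaves unused.
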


\section{background}
A convex body $K$ is said to be of class $C^{2}_{+}$, if its boundary hypersurface is two-times continuously differentiable and the support function is differentiable.

Let $K,L$ be two convex bodies with the origin of $\mathbb{R}^{n}$ in their interiors. In the following, we put $a\cdot K:=a^{\frac{1}{p}}K$ and $b\cdot L:=b^{\frac{1}{p}}L$ where $a,b>0$. For $p\geq 1$, the $L_p$-linear combination $a\cdot K+_{p}b\cdot L$ is defined as the convex body whose support function is given by $(ah_K^p+b h_L^p)^{\frac{1}{p}}.$

For $K,L\in \mathcal{K}_0^n,$ the mixed $L_p$-volume $V_{p}(K,L)$ is defined as the first variation of the usual volume with respect to the $L_p$-sum:
\[\frac{n}{p}V_{p}(K,L)=\lim_{\varepsilon\to 0^+}\frac{V(K+_p\varepsilon\cdot L)-V(K)}{\varepsilon}.\]
Aleksandrov, Fenchel and Jessen for $p=1$ and Lutwak \cite{Lut93} for $p>1$ have shown that there exists a unique Borel measure $S_{p}(K,\cdot)$ on $ S^{n-1}$, $L_p$-surface area measure, such that
\[V_p(K,L)=\frac{1}{n}\int h_L^p(u)dS_{p}(K,u).\]
Moreover, $S_{p}(K,\cdot)$ is absolutely continuous with respect to the surface area measure of $K$, $S(K,\cdot)$, and has the Radon--Nikodym derivative
\[\frac{dS_{p}(K,\cdot)}{dS(K,\cdot)}=h_K^{1-p}(\cdot).\]

The measure $dS_{p,K}=h_K^{1-p}dS_K$ is known as the $L_p$-surface area measure. If the boundary of $K$ is $C_+^2$, then
\[\frac{dS_K}{d\sigma}=\frac{1}{\mathcal{K}_K\circ \nu_K^{-1}}=f_K.\]

For $p>1$, the $L_p$-Minkowski inequality states that for convex bodies $K,L$ with the origin in their interiors we have
\begin{align*}
	\frac{1}{n}\int h_L^p dS_p(K)\geq V(K)^{1-\frac{p}{n}}V(L)^{\frac{p}{n}},
\end{align*}
with equality holds if and only if $K$ and $L$ are dilates (i.e. for some $\lambda>0$, $K=\lambda L$); see \cite{Lut93}. For $p=1$, the same inequality holds for all $K,L\in\mathcal{K}^n,$ and equality holds if and only if $K$ is homothetic to $L.$

The polar body, $K^{\ast}$, of $K\in \mathcal{K}^n_0$ is the convex body defined by
\[K^{\ast}=\{y\in\mathbb{R}^n: x\cdot y\leq 1,~\forall x\in K\}.\]
All geometric quantities associated with the polar body are furnished by $\ast.$ For $x\in\operatorname{int}K$, let $K^x:=(K-x)^{\ast}.$ The Santal\'{o} point of $K$, denoted by $s=s(K)$, is the unique point in $\operatorname{int}K$ such that
\[V(K^s)\leq V(K^x)\quad \forall x\in\operatorname{int}K.\]
If $K=-K$, then $s(K)=0$ and $K^{\ast}=K^{s}.$ 

The Blaschke-Santal\'{o} inequality states that
\begin{equation*}
V(K^s)V(K)\leq V(B)^2,
\end{equation*}
and equality holds if and only if $K$ is an ellipsoid. 
\begin{definition}\label{def functionals}
The $L_p$-widths of $K\in \mathcal{K}^n$ are defined as follows.
\begin{enumerate}
\item For $p> 1:$ $\mathcal{E}_p(K)=\frac{1}{\omega_n}\inf_{x\in\operatorname{int}K}\int h_{K-x}^pd\sigma .$
 \item For $p=0:$
$\mathcal{E}_0(K)=\frac{1}{\omega_n}\sup_{x\in \operatorname{int}K}\int \log h_{K-x}d\sigma.$
 \item For $0<p<1:$
$\mathcal{E}_p(K)=\frac{1}{\omega_n}\sup_{x\in\operatorname{int}K}\int h_{K-x}^pd\sigma.$
 \item For $-n\leq p<0:$
$\mathcal{E}_p(K)=\frac{1}{\omega_n}\inf_{x\in\operatorname{int}K}\int h_{K-x}^pd\sigma.$
\end{enumerate}
Here $\omega_n=n\kappa_n=\int d\sigma.$
\end{definition}
Here and in the sequel, $e_p$ denotes the unique point at which the corresponding $\sup$ or $\inf$ is attained. The points $e_p$ are \emph{always} in the interior of the convex body; see e.g. \cite{I2}*{Lem. 3.1}. If $K$ is origin-symmetric, then $e_p(K)$ lies at the origin.

For $p\geq 1$ by the $L_p$-Minkowski inequality we have
\begin{align}\label{ineq p>1}
	\mathcal{E}_p(\tilde{K})\geq 1.
\end{align}
For $p\in (-n,0]$ by the Blaschke-Santal\'{o} inequality,
\begin{align}\label{nonpositive p}
\mathcal{E}_0(\tilde{K})\geq 0,\quad\mathcal{E}_p(\tilde{K})\leq 1,
\end{align}
and equality holds when $K$ is a ball.
Moreover, for $p\in (0,1)$ we have
\begin{align}\label{eq: p to -p}
\mathcal{E}_{p}(\tilde{K})\mathcal{E}_{-p}(\tilde{K})
&=\frac{1}{\omega_n^2}\int h_{\tilde{K}-e_{p}(\tilde{K})}^pd\sigma\int h_{\tilde{K}-e_{-p}(\tilde{K})}^{-p}d\sigma\\
&\geq
\frac{1}{\omega_n^2}\int h_{\tilde{K}-e_{-p}(\tilde{K})}^pd\sigma\int h_{\tilde{K}-e_{-p}(\tilde{K})}^{-p}d\sigma\geq 1,\nonumber
\end{align}
where we used the definition of $e_{p}$ in the last line.
Therefore we obtain
\begin{align}\label{positive p}
\mathcal{E}_{p}(\tilde{K})\geq 1,
\end{align}
and the equality holds only for balls.

We conclude this section by remarking that $\mathcal{E}_p$ enjoys the second {\L}ojasiewicz-Simon gradient inequality; see \cites{Sim83,Sim96} for further details.

\section{Stability of the width functionals}

In this section we prove the stability of the inequalities \eqref{ineq p>1} and \eqref{nonpositive p} ($p\neq 0$).

\begin{lemma}\label{dist of entropy point to O}
Suppose $p\in[-n,0)$. Let $K\in \mathcal{K}^n$ with $V(K)=V(B)$. Then
\[|e_p(K)-s(K)|^2\leq c_0\left(1-\mathcal{E}_p(K)\right)D(K)^{2-p},\]
where $c_0^{-1}:=\frac{p(p-1)}{2\omega_n}\int (u\cdot v )^2d\sigma(u)=\frac{p(p-1)}{2n}$ for any vector $v,$ and $D(K)$ denotes the diameter of $K.$
\end{lemma}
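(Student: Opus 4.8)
The plan is to exploit the second-order optimality condition satisfied by the entropy point $e_p(K)$. By definition, for $p\in[-n,0)$ the point $e_p(K)$ minimizes the strictly convex function $x\mapsto \Phi(x):=\frac{1}{\omega_n}\int_{S^{n-1}}h_{K-x}^p\,d\sigma$ over $\operatorname{int}K$, so $\nabla\Phi(e_p(K))=0$. Write $e=e_p(K)$ and $s=s(K)$. The idea is to Taylor-expand $\Phi$ around $e$: since $\nabla\Phi(e)=0$, we get $\Phi(s)=\Phi(e)+\tfrac12\,(s-e)^\top D^2\Phi(\xi)(s-e)$ for some $\xi$ on the segment $[e,s]$. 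The left side is $\mathcal{E}_p(K)$ (because $s$ is the Santaló point and... wait — more carefully, $\Phi(e)=\mathcal{E}_p(K)$ and $\Phi(s)\geq\mathcal{E}_p(K)$, and one should instead use that $\Phi(s)\leq\Phi(0\text{-ish bound})$); the cleaner route is to combine $\Phi(e)=\mathcal{E}_p(K)$ with an upper bound on $\Phi(s)$ coming from the Blaschke–Santaló inequality, namely $\Phi(s)\geq 1$ is the wrong direction, so instead I would Taylor expand and bound $\Phi(s)-\Phi(e)$ from above using that the quantity $\mathcal{E}_p$ at the Santaló point relates to $V(K^s)$ via Jensen/Hölder, giving $\Phi(s)\le$ something controlled, hence $\Phi(s)-\Phi(e)\le 1-\mathcal{E}_p(K)$ up to the Blaschke–Santaló defect — actually the clean statement is that $\Phi(s)-\mathcal{E}_p(K)=\Phi(s)-\Phi(e)$ and one bounds $\Phi(s)\le 1$ is false in general; rather, I would note $\tfrac{1}{\omega_n}\int h_{K-s}^p d\sigma \ge \big(V(K^s)/\kappa_n\big)^{\!-p/n}\cdot(\text{const})\ge 1$ won't close it either. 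Let me restate the actual mechanism.

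The correct mechanism: by the arithmetic–geometric or Hölder/Jensen inequality applied to the radial function of $K^s$, $\Phi(s)=\tfrac{1}{\omega_n}\int h_{K-s}^p\,d\sigma = \tfrac{1}{\omega_n}\int \rho_{K^s}^{-p}\,d\sigma \le \big(\tfrac{1}{\omega_n}\int \rho_{K^s}^{n}\,d\sigma\big)^{-p/n} = \big(V(K^s)/\kappa_n\big)^{-p/n}\le\big(V(B)^2/(V(K)\kappa_n)\big)^{-p/n}=1$, using $-p\in(0,n]$ so that $t\mapsto t^{-p/n}$ is concave on the relevant range and $V(K)=V(B)=\kappa_n$ together with Blaschke–Santaló. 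Thus $\Phi(s)\le 1$, while $\Phi(e)=\mathcal{E}_p(K)$, so $\Phi(s)-\Phi(e)\le 1-\mathcal{E}_p(K)$. Combined with the Taylor expansion $\Phi(s)-\Phi(e)=\tfrac12\,(s-e)^\top D^2\Phi(\xi)(s-e)$, it remains to bound $D^2\Phi(\xi)$ from below. A direct differentiation gives $\partial_i\partial_j\Phi(x)=\tfrac{p(p-1)}{\omega_n}\int h_{K-x}^{p-2}\,u_iu_j\,d\sigma$ (since $\partial_{x_i}h_{K-x}(u)=-u_i$), and since $p(p-1)>0$ this is a positive-definite matrix. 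To lower-bound it I would use $h_{K-x}(u)\le D(K)$ for every $x\in K$ and $u\in S^{n-1}$, hence $h_{K-x}^{p-2}\ge D(K)^{p-2}$ (because $p-2<0$), giving $D^2\Phi(\xi)\succeq p(p-1)D(K)^{p-2}\cdot\tfrac{1}{\omega_n}\int u u^\top\,d\sigma = \tfrac{p(p-1)}{n}D(K)^{p-2}\,\mathrm{Id}$. Therefore $\tfrac12\cdot\tfrac{p(p-1)}{n}D(K)^{p-2}|s-e|^2\le 1-\mathcal{E}_p(K)$, i.e. $|e_p(K)-s(K)|^2\le \tfrac{2n}{p(p-1)}\,D(K)^{2-p}\,(1-\mathcal{E}_p(K))$, which is exactly the claimed inequality with $c_0^{-1}=\tfrac{p(p-1)}{2n}$.

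A few technical points I would verify carefully. First, the segment $[e,s]$ lies in $\operatorname{int}K$: this is clear since both endpoints are interior points and $K$ is convex, so the Taylor remainder formula applies on a set where $\Phi$ is smooth. Second, differentiating under the integral sign is justified because $h_{K-x}$ is bounded below by a positive constant (the inradius about $x$) locally uniformly in $x\in\operatorname{int}K$, and bounded above by $D(K)$, so the integrands and their $x$-derivatives are dominated. Third, the bound $h_{K-x}^{p-2}(u)\ge D(K)^{p-2}$ must be applied at $x=\xi\in[e,s]\subseteq K$; since $h_{K-\xi}(u)=\max_{y\in K}(y-\xi)\cdot u\le D(K)$, this is valid. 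Finally, the identity $\tfrac{1}{\omega_n}\int_{S^{n-1}}(u\cdot v)^2\,d\sigma(u)=\tfrac1n$ for any unit (or arbitrary) vector $v$ follows from $\tfrac{1}{\omega_n}\int uu^\top\,d\sigma=\tfrac1n\mathrm{Id}$ by symmetry. The main obstacle is getting the upper bound $\Phi(s)\le 1$ cleanly — this is where Blaschke–Santaló and the concavity of $t\mapsto t^{-p/n}$ (valid precisely because $p\ge -n$, so $-p/n\le 1$) enter, and it is the one place the hypothesis $p\ge -n$ is used; everything else is elementary convexity and Taylor expansion.
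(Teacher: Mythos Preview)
Your proposal is correct and follows essentially the same route as the paper: Taylor-expand the strictly convex functional $\Phi(x)=\tfrac{1}{\omega_n}\int h_{K-x}^p\,d\sigma$ about its minimizer $e_p(K)$, lower-bound the Hessian via $h_{K-x}\le D(K)$ and $p-2<0$, and control $\Phi(s)-\Phi(e_p)$ by $1-\mathcal{E}_p(K)$ using Blaschke--Santal\'o. The paper carries this out by restricting $\Phi$ to the segment $[e_p(K),s(K)]$ and working with the resulting one-variable function $E(t)$, which is exactly the device underlying your multivariable Lagrange remainder; your version additionally spells out the step $\tfrac{1}{\omega_n}\int h_{K-s}^p\,d\sigma\le 1$ via the power-mean (Jensen) inequality with exponent $-p/n\le 1$ followed by Blaschke--Santal\'o, which the paper leaves implicit.
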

\begin{proof}
We may suppose $e_p(K)\neq s(K).$
Define $v=-\frac{e_p(K)-s(K)}{|e_p(K)-s(K)|}$ and 
\[e(t)=e_p(K)+tv,\quad t\in [0,|e_p(K)-s(K)|].\]
Let us denote the support function of
$K-e(t)$ by $h_t$ and \[E(t):=\frac{1}{\omega_n}\int h_t^pd\sigma.\] 
Note that $E(0)=\mathcal{E}_p(K)$ and $E'(0)=0$. Moreover, the second derivative of $E$ is given by
\begin{align*}
	E''(t)&=\frac{p(p-1)}{\omega_n}\int h_t^{p-2}(u)(u\cdot v)^2d\sigma(u).
\end{align*}
Due to $h_t\leq D(K)$ we obtain
\[D(K)^{p-2}|e_p(K)-s(K)|^2\leq c_0\left(\frac{1}{\omega_n}\int h_{K-s(K)}^pd\sigma-\mathcal{E}_p(K)\right).\]
Now the claim follows from the Blaschke-Santal\'{o} inequality.
\end{proof}

\begin{theorem}\label{thm: stability p} The following statements hold.
\begin{enumerate}

\item Let $p\geq 1.$ If $\mathcal{E}_p(\tilde{K})\leq 1+\varepsilon$, then 
 \[\mathcal{A}(\tilde{K},B)^2\leq Cn^{5}\left((1+\varepsilon)^{\frac{1}{p}}-1\right).\]
Here $C$ is a universal constant that does not depend on $n.$
\item Let $p\in (-n,0).$ If $\mathcal{E}_p(\tilde{K})\geq 1-\varepsilon$, then there exists an origin-centred ball of radius $r$, $B_r$, such that
 	\begin{align*}
\delta_2(\tilde{K}-e_p(\tilde{K}),B_r)\leq \left(2c_1\left(D(\tilde{K})+r\right)^{n+1}\varepsilon\right)^{\frac{1}{2}}+\left(c_0 D(\tilde{K})^{2-p}\varepsilon\right)^{\frac{1}{2}}.
\end{align*}
\end{enumerate}
Moreover, if $\tilde{K}$ is origin-symmetric, then the last term on the right-hand-side can be dropped and $D(\tilde{K})$ can be replaced by $\frac{1}{2}D(\tilde{K})$. Here
\begin{align*}
1\leq r\leq (1-\varepsilon)^{\frac{1}{p}},\quad
c_1:=\max\left\{\frac{n}{p+n},-\frac{n}{p}\right\},
\end{align*}
and $c_0$ is the constant from \Cref{dist of entropy point to O}.
\end{theorem}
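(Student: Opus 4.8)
The proof treats the two regimes separately; in each case it extracts from a near-equality in a sharp inequality an $L^2$-type estimate, which it then translates into the asserted distance.

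\emph{Part (1).} The plan is to reduce to a \emph{quantitative Urysohn inequality}. For $p\geq 1$ and any $x\in\operatorname{int}\tilde{K}$, the power-mean inequality on the probability space $(S^{n-1},d\sigma/\omega_n)$ gives $\bigl(\tfrac1{\omega_n}\int h_{\tilde{K}-x}^p\,d\sigma\bigr)^{1/p}\geq\tfrac1{\omega_n}\int h_{\tilde{K}-x}\,d\sigma$, and the right side is the translation-invariant half mean width $\tfrac1{\omega_n}\int h_{\tilde{K}}\,d\sigma$. Since $t\mapsto t^{1/p}$ is increasing, the hypothesis $\mathcal{E}_p(\tilde{K})\leq 1+\varepsilon$ therefore yields $\tfrac1{\omega_n}\int h_{\tilde{K}}\,d\sigma\leq(1+\varepsilon)^{1/p}$. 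As $V(\tilde{K})=\kappa_n$, Urysohn's inequality says this quantity is $\geq 1$, and I would invoke Segal's refinement from \cite{Seg12} (built on the Figalli--Maggi--Pratelli stability of the Brunn--Minkowski inequality), which bounds $\mathcal{A}(\tilde{K},B)^2$ by a dimensional constant times the Urysohn deficit $\tfrac1{\omega_n}\int h_{\tilde{K}}\,d\sigma-1$; combining this with the previous bound gives the claim with constant $Cn^5$. The only point requiring care is to read off the explicit power of $n$ from that stability estimate.

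\emph{Part (2).} Write $s=s(\tilde{K})$, $L=\tilde{K}-s$, so that $L^\ast=\tilde{K}^s$ and $h_L=1/\rho_{L^\ast}$. Recall the proof of $\mathcal{E}_p(\tilde{K})\leq 1$:
\[
\mathcal{E}_p(\tilde{K})\ \leq\ \frac1{\omega_n}\int h_L^p\,d\sigma\ =\ \frac1{\omega_n}\int\bigl(\rho_{L^\ast}^{\,n}\bigr)^{-p/n}\,d\sigma\ \leq\ \Bigl(\frac1{\omega_n}\int\rho_{L^\ast}^{\,n}\,d\sigma\Bigr)^{-p/n}\ =\ \Bigl(\tfrac{V(L^\ast)}{\kappa_n}\Bigr)^{-p/n}\ \leq\ 1,
\]
the first inequality by choosing $x=s$ in the infimum defining $\mathcal{E}_p$, the middle one by Jensen's inequality for the concave function $t\mapsto t^{-p/n}$ (here $-p/n\in(0,1)$), and the last by the Blaschke--Santal\'o inequality. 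Under the hypothesis $\mathcal{E}_p(\tilde{K})\geq 1-\varepsilon$ each of these three inequalities has deficit at most $\varepsilon$. I would take $r:=(V(L^\ast)/\kappa_n)^{-1/n}$; then $r\geq 1$ by Blaschke--Santal\'o, while $r^p=(V(L^\ast)/\kappa_n)^{-p/n}\geq\mathcal{E}_p(\tilde{K})\geq 1-\varepsilon$ forces $r\leq(1-\varepsilon)^{1/p}$, so this is the advertised ball. By construction $\tfrac1{\omega_n}\int h_L^{-n}\,d\sigma=r^{-n}$, i.e. $h_L^{-n}$ has mean $r^{-n}$, and the deficit in the Jensen step equals, after a Taylor expansion of $\psi(t)=t^{-p/n}$ about $r^{-n}$ (the linear term integrating to zero), $r^p-\tfrac1{\omega_n}\int h_L^p\,d\sigma=\tfrac12\,\tfrac1{\omega_n}\int|\psi''(\eta)|\,(h_L^{-n}-r^{-n})^2\,d\sigma\leq\varepsilon$, with $\eta$ lying between $h_L^{-n}$ and $r^{-n}$.

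Since $|\psi''|$ is decreasing, it degenerates only where $h_L^{-n}$ is large, i.e. where $h_L$ is small; on the set where $h_L$ stays above a fixed fraction of $r$ the bound $h_L\leq D(\tilde{K})$ yields a lower bound for $|\psi''(\eta)|$, while on the complementary set one has $(h_L-r)^2\leq r^2$ and the $\sigma$-measure of that set is itself $O(\varepsilon)$ (again from the displayed deficit, since there the integrand is bounded below). Combining these with the elementary estimate $|h_L^{-n}-r^{-n}|\geq n\,(D(\tilde{K})+r)^{-(n+1)}|h_L-r|$ converts the $L^2$-control of $h_L^{-n}-r^{-n}$ into $\delta_2(\tilde{K}-s,B_r)^2\leq 2c_1\,(D(\tilde{K})+r)^{n+1}\varepsilon$ after a careful bookkeeping of the constants; the exponents $n/(n+p)$ and $-n/p$ forming $c_1$ are exactly the conjugate exponents attached to $\psi$. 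Finally \Cref{dist of entropy point to O} gives $|e_p(\tilde{K})-s|\leq(c_0D(\tilde{K})^{2-p}\varepsilon)^{1/2}$, and since $h_{\tilde{K}-e_p(\tilde{K})}-h_{\tilde{K}-s}=(s-e_p(\tilde{K}))\cdot u$ we get $\delta_2(\tilde{K}-e_p(\tilde{K}),\tilde{K}-s)=|e_p(\tilde{K})-s|/\sqrt{n}\leq(c_0D(\tilde{K})^{2-p}\varepsilon)^{1/2}$; the triangle inequality for $\delta_2$ then yields the stated bound. In the origin-symmetric case $e_p(\tilde{K})=s(\tilde{K})=0$, so the \Cref{dist of entropy point to O} term disappears, and the bound $h_{\tilde{K}}(u)\leq\tfrac12 D(\tilde{K})$ lets $D(\tilde{K})$ be replaced by $\tfrac12 D(\tilde{K})$ throughout.

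\emph{Main obstacle.} The delicate step is the quantitative Jensen estimate in Part (2): since $|\psi''|$ vanishes at infinity one cannot simply divide by its infimum, so the region where $h_L$ is small must be isolated and absorbed by a cruder argument, and tracking precisely how $n$, $D(\tilde{K})$ and $r$ enter — so as to land exactly the factor $2c_1(D(\tilde{K})+r)^{n+1}$ — is the main technical work. A secondary, routine obstacle is extracting the explicit dimensional constant from the quantitative Urysohn inequality used in Part (1).
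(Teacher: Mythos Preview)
Part (1) of your proposal coincides with the paper's argument: reduce to the mean width via the power--mean inequality and invoke Segal's quantitative Urysohn inequality.

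Part (2) follows the same skeleton as the paper --- work at the Santal\'{o} point, quantify the Jensen/H\"{o}lder step that underlies \eqref{nonpositive p}, and then shift from $s(\tilde K)$ to $e_p(\tilde K)$ via \Cref{dist of entropy point to O} --- but the paper's execution of the middle step is different and cleaner. Rather than Taylor--expanding $\psi(t)=t^{-p/n}$ and splitting into the regions $\{h_s\le \alpha r\}$ and $\{h_s>\alpha r\}$, the paper applies Aldaz's stability form of H\"{o}lder's inequality \cite{Aldaz2008stability} as a black box, which directly yields
\[
\Bigl|h_s^{-n/2}\big/|h_s^{-n/2}|_{L^2}-\omega_n^{-1/2}\Bigr|_{L^2}^2\leq c_1\,\varepsilon,
\]
with the constant $c_1=\max\{n/(n+p),-n/p\}$ appearing \emph{exactly} as the larger H\"{o}lder exponent. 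A two--line algebraic conversion (multiply through by $h_s^{n/2}r^{n/2}$, then factor $h_s^{n/2}-r^{n/2}$ using $r\ge 1$) then gives $|h_s-r|_{L^2}^2\le c_1\omega_n(D^{1/2}+r^{1/2})^2D^{n}\varepsilon\le 2c_1\omega_n(D+r)^{n+1}\varepsilon$ --- no set decomposition is needed at all.

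Your route would yield a bound of the same qualitative form, but the splitting introduces an auxiliary threshold $\alpha$ and the resulting constants involve $(-p/n)(1+p/n)$ (from $\psi''$) together with powers of $\alpha$ and $r$ that do not collapse to $2c_1(D+r)^{n+1}$; in particular the ``good'' set contributes a factor $(D+r)^{2(n+1)}$ through your elementary estimate on $|h_L^{-n}-r^{-n}|$, which is already too large. So the step you flag as ``the main technical work'' is real: your second--order approach will not land the stated constant without essentially rederiving Aldaz's inequality, whose point is precisely that stability of H\"{o}lder is most naturally measured by $\|f^{1/2}-\text{const}\|_{L^2}$ rather than by a second--order remainder. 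The remainder of Part~(2) (the identification of $r$, the bounds $1\le r\le (1-\varepsilon)^{1/p}$, the passage to $e_p$, and the origin--symmetric simplification) is carried out correctly and matches the paper.
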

\begin{proof}
\emph{Case} $p\geq 1$: Since $\mathcal{E}_p(\tilde{K})\leq 1+\varepsilon,$ we have
\[\frac{1}{\omega_n}\int h_{\tilde{K}}d\sigma\leq \mathcal{E}_p(\tilde{K})^{\frac{1}{p}}\leq (1+\varepsilon)^{\frac{1}{p}}.\]
The refinement of Urysohn's inequality in \cite{Seg12} completes the proof.

\emph{Case} $-n<p<0:$ Assume $V(K)=V(B)$. We denote the support function of $K-e_{p}(K)$ by $h_{p}$ and the support function of $K-s(K)$ by $h_s.$ Since $s(K),\,e_{p}(K)$ are in the interior of $K$, both $h_{s}$ and $h_{p}$ are positive functions.

Let us put
\[f=h_{s}^{p},~ g=1,~ a=-\frac{n}{p},~b=\frac{n}{n+p},~c_1=\max \{a,b\}.\]
By \cite{Aldaz2008stability}*{Thm. 2.2}, we have
\begin{align}\label{aldaz}
\frac{\int h_s^{p}d\sigma}{\left(\int\frac{1}{h_{s}^{n}}d\sigma\right)^{-\frac{p}{n}}
\omega_n^{\frac{p+n}{n}}}\leq 1-\frac{1}{c_1}\left|\frac{h_s^{-\frac{n}{2}}}
{\left(\int\frac{1}{h_{s}^{n}}d\sigma\right)^{\frac{1}{2}}}-
\frac{1}{\omega_n^{\frac{1}{2}}}\right|_{L^2}^2.
\end{align}

Due to our assumption,
\begin{align}\label{eq: mni2}
\int h_{s}^{p}d\sigma\geq \int h_{p}^{p}d\sigma\geq \omega_n(1-\varepsilon).
\end{align}
By the Blaschke-Santal\'{o} inequality, we have
\begin{align}\label{eq: mni}
\int \frac{1}{h_{s}^{n}}d\sigma\leq \omega_n.
\end{align}
From (\ref{eq: mni2}), (\ref{eq: mni}), it follows that
\begin{align}\label{litttle step}
1-\varepsilon &\leq\frac{\int h_s^{p}d\sigma}{\left(\int \frac{1}{h_{s}^{n}}d\sigma\right)^{-\frac{p}{n}}
\omega_n^{\frac{p+n}{n}}},\\
(1-\varepsilon)\omega_n&\leq \int h_{s}^{p}d\sigma\leq \left(\int \frac{1}{h_{s}^{n}}d\sigma\right)^{-\frac{p}{n}}\omega_n^{\frac{n+p}{n}}.\nonumber
\end{align}
Combining \eqref{aldaz} and \eqref{litttle step} we obtain
\begin{align}\label{xxyyzz}
\left|h_{s}^\frac{n}{2}-r^\frac{n}{2}\right|_{L^2}^2\leq c_1\omega_n D(K)^{n}\varepsilon,
\end{align}
where
\begin{align}\label{ie: another gamma determine step 1}
r^n:=\omega_n\left(\int \frac{1}{h_{s}^{n}}d\sigma\right)^{-1},\quad 1\leq r\leq (1-\varepsilon)^{\frac{1}{p}}.
\end{align}
In view of \eqref{xxyyzz} and \eqref{ie: another gamma determine step 1} we have
 \begin{align}\label{ie: even n}
 \left|h_{s}-r\right|_{L^2}^2\leq c_1\omega_n (D(K)^{\frac{1}{2}}+r^{\frac{1}{2}})^2D(K)^{n}\varepsilon.
\end{align}

If $K$ is origin-symmetric, then $s(K)=e_p(K)$ and the proof is complete. Moreover, in this case we could have replaced $D(K)$ by $\frac{1}{2}D(K)$. Otherwise, to bound $\left|h_p-r\right|_{L^2}$, note that by \Cref{dist of entropy point to O} we have
\[|e_p(K)-s(K)|^2\leq c_0 D(K)^{2-p}\varepsilon.\]
Therefore,
\begin{align*}
\left|h_p-r\right|_{L^2}\leq{}& \left|h_s-r\right|_{L^2}+\omega_n^{\frac{1}{2}}|e_p(K)-s(K)|\\
\leq{}& \left(2c_1\omega_n\left(D(K)+r\right)^{n+1}\varepsilon\right)^{\frac{1}{2}}+\left(c_0\omega_n D(K)^{2-p}\varepsilon\right)^{\frac{1}{2}}.
\end{align*}
\end{proof}
\begin{remark}
The exponent $1/2$ in (1) is sharp; cf. \cite{FMP10}. Moreover, using \cite{Sch}*{Thm. 7.2.2} it is also possible to give a stability result of order $1/(n+1)$ in (1) for the Hausdorff distance $d_{\mathcal{H}}(\tilde{K}-\operatorname{cent}(\tilde{K}),B)$; we leave out the details to the interested reader. By cutting off opposite caps of height $\varepsilon$ of the unit ball, one can see that the optimal order cannot be better than $1$ in (2). 
\end{remark}

For proving \Cref{0<lp<1}, we only need the stability of the $L_p$-width functional for $p=-1$. In this case, we give a slightly better stability result along with a diameter bound.
\begin{theorem}\label{new stability diam bound p-1}
	Suppose $K$ is an origin-symmetric convex body with
	\[\mathcal{E}_{-1}(\tilde{K})\geq 1-\varepsilon\quad \mbox{for some}~\varepsilon\in (0,1).\]
Then there exists an origin-centred ball $B_r$ of radius $1\leq r\leq (1-\varepsilon)^{-1}$ such that
\begin{align*}
	\delta_2(\tilde{K},B_r)&\leq D(\tilde{K})\sqrt{\varepsilon}.
\end{align*}
Moreover, we have
\[\left(\frac{1}{2}D(\tilde{K})\right)^{\frac{1}{3}}\leq \left(1+\left(\frac{4\omega_{n-1}}{\omega_n}\right)^{\frac{1}{2}}\right)\frac{1}{1-\varepsilon}.\]
\end{theorem}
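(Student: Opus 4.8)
The plan is to prove both assertions by entirely elementary estimates, using only the Blaschke--Santal\'o inequality, Jensen's inequality (equivalently the power-mean inequality) and the Cauchy--Schwarz inequality; no Aldaz-type stability input is needed in the case $p=-1$. Throughout, origin-symmetry of $\tilde K$ gives $e_{-1}(\tilde K)=0$, hence $\mathcal{E}_{-1}(\tilde K)=\frac1{\omega_n}\int h_{\tilde K}^{-1}\,d\sigma\ge 1-\varepsilon$, and it gives the sharper circumradius bound $h_{\tilde K}\le R(\tilde K)=\tfrac12 D(\tilde K)$, which is what makes the symmetry hypothesis essential.

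\emph{The $L^2$-estimate.} I would take $r$ defined by $r^{-n}=\frac1{\omega_n}\int h_{\tilde K}^{-n}\,d\sigma$. Then $r^{-n}=V(\tilde K^{\ast})/\kappa_n\le 1$ by Blaschke--Santal\'o, while $r^{-n}\ge\big(\frac1{\omega_n}\int h_{\tilde K}^{-1}\,d\sigma\big)^{n}\ge(1-\varepsilon)^{n}$ by Jensen applied to $t\mapsto t^{n}$; hence $1\le r\le(1-\varepsilon)^{-1}$. Next, using $h_{\tilde K}-r=h_{\tilde K}\big(1-r\,h_{\tilde K}^{-1}\big)$ and $h_{\tilde K}\le\tfrac12 D(\tilde K)$,
\[
\delta_2(\tilde K,B_r)^2=\frac1{\omega_n}\int (h_{\tilde K}-r)^2\,d\sigma\le\frac{D(\tilde K)^2}{4}\cdot\frac1{\omega_n}\int\big(1-r\,h_{\tilde K}^{-1}\big)^2\,d\sigma .
\]
Expanding the square, the linear term is controlled by $\frac1{\omega_n}\int h_{\tilde K}^{-1}\,d\sigma\ge1-\varepsilon$ and the quadratic term by the power-mean bound $\frac1{\omega_n}\int h_{\tilde K}^{-2}\,d\sigma\le\big(\frac1{\omega_n}\int h_{\tilde K}^{-n}\,d\sigma\big)^{2/n}=r^{-2}$, so $\frac1{\omega_n}\int(1-r\,h_{\tilde K}^{-1})^2\,d\sigma\le 2\big(1-r(1-\varepsilon)\big)\le 2\varepsilon$ since $r\ge1$. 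This yields $\delta_2(\tilde K,B_r)\le D(\tilde K)\sqrt{\varepsilon/2}\le D(\tilde K)\sqrt\varepsilon$.

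\emph{The diameter bound.} Write $M:=\tfrac12 D(\tilde K)=R(\tilde K)=\max_{S^{n-1}}h_{\tilde K}=h_{\tilde K}(u_0)$. The equality case of Cauchy--Schwarz forces the maximiser to be $Mu_0$, so $\pm Mu_0\in\tilde K$ by symmetry and therefore $h_{\tilde K}(u)\ge M\,|u\cdot u_0|$ for all $u$; also $M\ge1$ because $\tilde K\subseteq MB$ has volume $\kappa_n$. For $\tau\in(0,1]$ I would split $S^{n-1}$ into the zone $\{|u\cdot u_0|>\tau\}$, where $h_{\tilde K}^{-1}<(M\tau)^{-1}$, and the band $\{|u\cdot u_0|\le\tau\}$, whose $\sigma$-measure is at most $4\omega_{n-1}\tau$; on the band I would apply Cauchy--Schwarz together with $\frac1{\omega_n}\int h_{\tilde K}^{-2}\,d\sigma\le1$ (power mean plus Blaschke--Santal\'o). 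This gives
\[
1-\varepsilon\le\mathcal{E}_{-1}(\tilde K)\le\frac1{M\tau}+\Big(\frac{4\omega_{n-1}\tau}{\omega_n}\Big)^{1/2}\qquad\text{for every }\tau\in(0,1].
\]
Taking $\tau=M^{-2/3}$ (allowed since $M\ge1$) makes both terms equal to $M^{-1/3}$ times a constant, and rearranging gives $M^{1/3}\le\big(1+(4\omega_{n-1}/\omega_n)^{1/2}\big)(1-\varepsilon)^{-1}$, which is the stated inequality.

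\emph{Main point.} There is no substantial obstacle; the only subtleties are the bookkeeping choices. The first is picking the comparison radius through the $(-n)$-th moment of $h_{\tilde K}$, so that Blaschke--Santal\'o together with Jensen pins $r$ into $[1,(1-\varepsilon)^{-1}]$ while the power-mean inequality simultaneously controls the $L^2$-mass. The second is the scale $\tau=M^{-2/3}$ of the equatorial band: it is exactly the width balancing the ``away from the equator'' contribution $1/(M\tau)$ against the ``near the equator'' contribution $(4\omega_{n-1}\tau/\omega_n)^{1/2}$, and it is what produces the exponent $1/3$ and the precise constant in the diameter bound.
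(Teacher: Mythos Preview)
Your proof is correct and follows essentially the same route as the paper: the paper also expands $(1-r h_{\tilde K}^{-1})^2$ using the hypothesis and Blaschke--Santal\'o (via the Cauchy--Schwarz identity rather than a direct expansion), and the diameter argument is identical up to reparameterization (the paper takes the threshold $h_{\tilde K}\le R^{1/3}$, which corresponds exactly to your choice $\tau=M^{-2/3}$). The only cosmetic difference is that the paper defines $r$ through the $(-2)$-moment $r^{-2}=\frac{1}{\omega_n}\int h_{\tilde K}^{-2}\,d\sigma$ rather than your $(-n)$-moment, but since you then invoke the power-mean inequality to bound the $(-2)$-moment by $r^{-2}$ anyway, the two choices lead to the same estimate.
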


\begin{proof}
Set $h=h_{\tilde{K}}$. We have
\begin{align*}
\frac{\int \frac{1}{h}d\sigma}{\left(\int \frac{1}{h^2}d\sigma\right)^{\frac{1}{2}}
\omega_n^{\frac{1}{2}}}= 1-\frac{1}{2}\left|\frac{\frac{1}{h}}
{\left(\int \frac{1}{h^2}d\sigma\right)^{\frac{1}{2}}}-
\frac{1}{\omega_n^{\frac{1}{2}}}\right|_{L^2}^2.
\end{align*}
By our assumption and the Blaschke-Santal\'{o} inequality,
\begin{align*}
\int \frac{1}{h}d\sigma\geq \omega_n(1-\varepsilon),\quad \int \frac{1}{h^2}d\sigma\leq \omega_n.
\end{align*}
Therefore,
\begin{align*}
1-\varepsilon \leq\frac{\int \frac{1}{h}d\sigma}{\left(\int \frac{1}{h^2}d\sigma\right)^{\frac{1}{2}}
\omega_n^{\frac{1}{2}}},\quad 
(1-\varepsilon)\omega_n\leq \int \frac{1}{h}d\sigma\leq \left(\int \frac{1}{h^2}d\sigma\right)^{\frac{1}{2}}\omega_n^{\frac{1}{2}}.\nonumber
\end{align*}
Combining these inequalities we obtain
\begin{align*}
	\left|h-r\right|_{L^2}^2\leq \omega_n D(\tilde{K})^2\varepsilon,
\end{align*}
where $r^2:=\omega_n\left(\int \frac{1}{h^2}d\sigma\right)^{-1}$ and
$
1\leq r\leq (1-\varepsilon)^{-1}.
$

Next we estimate the diameter from above. Define
	\[S=\{v\in S^{n-1}: h_{\tilde{K}}(v)\leq R^{\frac{1}{3}}\},\]
	where $R:=\max h_{\tilde{K}}=h_{\tilde{K}}(u)$ for some vector $u\in S^{n-1}$. We may assume $R>1.$
Then by the Blaschke-Santal\'{o} inequality we have
\begin{align*}
	(1-\varepsilon)\omega_n&\leq \int_{S} \frac{1}{h_{\tilde{K}}}d\sigma+\int_{S^c}\frac{1}{h_{\tilde{K}}}d\sigma\\
	&\leq \left(\int_{S} \frac{1}{h_{\tilde{K}}^2}d\sigma\right)^{\frac{1}{2}}|S|^{\frac{1}{2}}+\frac{|S^c|}{R^{\frac{1}{3}}}\\
	&\leq (\omega_n)^{\frac{1}{2}}|S|^{\frac{1}{2}}+\frac{\omega_n}{R^{\frac{1}{3}}}.
\end{align*}
Moreover, by convexity we have $h_{\tilde{K}}(v)\geq R|u\cdot v|$ for all $v\in S^{n-1}.$
Hence if $v\in S$, then $|u\cdot v|\leq R^{-\frac{2}{3}}.$ Now using 
\[\frac{\pi}{2}-\arccos x\leq 2x,\quad \forall x\in [0,1],\] we obtain
\[\frac{1}{2}|S|\leq\omega_{n-1}\int_{\arccos R^{-\frac{2}{3}}}^{\frac{\pi}{2}}\sin^{n-2}\theta d\theta\leq \frac{2\omega_{n-1}}{R^{\frac{2}{3}}},\]
Therefore,
\[1-\varepsilon\leq \left(1+\left(\frac{4\omega_{n-1}}{\omega_n}\right)^{\frac{1}{2}}\right)\frac{1}{R^{\frac{1}{3}}}.\]
\end{proof}
We are ready to give the proofs of our main theorems.
\begin{proof}[Proof of \Cref{lp>1}]
Suppose
$m \leq h_K^{1-p}dS_K/d\sigma\leq M.$
Therefore by the $L_p$-Minkowski inequality,
\begin{align*}
	\frac{m}{n}\frac{\int h_K^pd\sigma}{V(B)^{1-\frac{p}{n}}V(K)^{\frac{p}{n}}} &\leq \frac{1}{n}\frac{\int h_K^ph_K^{1-p}dS_K}{V(B)^{1-\frac{p}{n}}V(K)^{\frac{p}{n}}}	 \\
&= \frac{V(K)^{1-\frac{p}{n}}}{V(B)^{1-\frac{p}{n}}}\nonumber\\
&\leq \frac{V(B)^{-\frac{p}{n}}\frac{1}{n}\int h_K^{1-p}dS_K }{V(B)^{1-\frac{p}{n}}}\leq M.\nonumber
\end{align*}
Hence $\mathcal{E}_p(\tilde{K})\leq \mathcal{R}_p(\tilde{K})$,
and by \Cref{thm: stability p} the proof is complete.
\end{proof}
\begin{proof}[Proof of \Cref{0<lp<1}]
Assume
$m \leq h_K^{1-p}dS_K/d\sigma\leq M.$
	Then by the $L_q$-Minkowski inequality for $q=p+1$ we have
\[\frac{1}{n}\int \frac{1}{h_K^{q-p}}h_K^{1-p}dS_K\geq V(K)^{1-\frac{q}{n}}V(B)^{\frac{q}{n}}.\]
Therefore,
\begin{align}\label{new case}
\frac{M}{n}V(K)^{\frac{q-p}{n}}\int \frac{1}{h_K^{q-p}}d\sigma\geq V(K)^{1-\frac{p}{n}}V(B)^{\frac{q}{n}}.	
\end{align}
Owing to \eqref{positive p} for $p>0$ we have
\[V(K)\geq \frac{m}{n}\int h_K ^pd\sigma\geq mV(K)^{\frac{p}{n}}V(B)^{1-\frac{p}{n}}\]
and hence for $p\geq 0$,
\begin{align}\label{lower vol bound}
V(K)^{1-\frac{p}{n}}\geq mV(B)^{1-\frac{p}{n}}.
\end{align}
Since $e_{p-q}(K)=0$, in view of \eqref{new case} we obtain
$\mathcal{E}_{p-q}(\tilde{K})\geq \mathcal{R}_p(K)^{-1}.$
The claim follows from \Cref{new stability diam bound p-1}.
\end{proof}
\begin{remark}\label{slight update}
It is clear from the proofs of \Cref{lp>1} and \Cref{0<lp<1} that if $K$ has only a positive continuous curvature function, then the same conclusions hold.
\end{remark}
\begin{remark}
Applying the Blaschke-Santal\'{o} inequality to the left-hand side of \eqref{new case}, we obtain
\[\left(\frac{V(K)}{V(B)}\right)^{\frac{n-p}{n}}\leq M.\]
This combined with \eqref{lower vol bound} yields
\[m\leq\left(\frac{V(K)}{V(B)}\right)^{\frac{n-p}{n}}\leq M.\]
Hence in the class of origin-symmetric bodies if $V(K)=V(B)$, then for any $p\geq 0$ the $L_p$-curvature function attains the value $1$ at some point; see also \Cref{qu2}. 
\end{remark}
\begin{proof}[Proof of \Cref{lp}]~
	Define $\tilde{\mathcal{E}}_p:\mathcal{F}_0^n\to (0,\infty)$ by
\[\tilde{\mathcal{E}}_p(h_L)= \left(\int h_L^pd\sigma\right)^{\frac{n}{p}}/V(L).\]
By the divergence theorem we have
\[(\operatorname{grad}\tilde{\mathcal{E}}_p)(h_K)=\frac{h_K^{p-1}\left(\int h_K^pd\sigma\right)^{\frac{n}{p}}}{V(K)^2}\left(\frac{nV(K)}{\int h_K^pd\sigma}-h_K^{1-p}f_K\right).\]
By \cite{Sim96}*{Sec. 3.13 (ii)} and \cite{Sim96}*{p. 80}, there exist $c_2,\delta>0$, such that for any $K$ with $|h_K-1|_{C^{3}}\leq \delta$, there holds
\begin{align*}
\left|\tilde{\mathcal{E}}_p(K)-\tilde{\mathcal{E}}_p(B)\right|^{\frac{1}{2}}\leq c_2\left|(\operatorname{grad}\tilde{\mathcal{E}}_p)
(h_K)\right|_{L^2}.
\end{align*}
Assuming $m\leq h_K^{1-p}f_K\leq M$ gives
\[m\leq \frac{nV(K)}{\int h_K^pd\sigma}\leq M. \]
This in turn implies
$\left|\mathcal{E}_p(\tilde{K})^{\frac{n}{p}}-1\right|\leq c_3 \left(\mathcal{R}_p(\tilde{K})-1\right)^{2},$
as well as
\[\mathcal{E}_p(\tilde{K})\geq \left(1+c_3 \left(\mathcal{R}_p(\tilde{K})-1\right)^{2}\right)^{\frac{p}{n}}.\]
Due to \Cref{thm: stability p}, the proof is complete.
\end{proof}

\begin{proof}[Proof of \Cref{2d-affine}]Suppose $m\leq H_K \leq M.$
By \cite{An1}*{Lem. 10},
\begin{align}\label{eq: something}
V(K)\geq \frac{\pi}{\sqrt{M}}.
\end{align}
In fact, the lemma states that if $V(K)=\pi$, then centro-affine curvature at some point attains $1$. Therefore, since $V(\sqrt{\pi/V(K)}K)=\pi$, the function $\left(V(K)/\pi\right)^2H_K$ takes the value $1$ at some point.
Hence using (\ref{eq: something}) and the H\"{o}lder inequality we obtain
\begin{align}\label{id: useful}
V(K)V(K^s)&\ge\frac{\left(\int h_Kf_KH_K^{\frac{1}{3}}
d\sigma\right)^3}{4\int h_Kf_Kd\sigma}\geq mV(K)^2\ge \pi^2\frac{m}{M}.\nonumber
\end{align}

If the Santal\'{o} point is at the origin, then we can obtain a slightly better lower bound for the volume product. By \cites{Hug}, we have
\[
H_K(u)H_{K^{\ast}}(u^{\ast})=1,
\]
where $u$ and $u^{\ast}$ are related by $\langle \nu_K^{-1}(u), \nu_{K^{\ast}}^{-1}(u^{\ast})\rangle=1.$
Since $K^s=K^{\ast}$, this yields
\[\frac{1}{M}\leq H_{K^s}\leq \frac{1}{m},\quad V(K^s)\geq \pi\sqrt{m}.\]
Therefore, 
$
V(K)V(K^{s})\geq \pi^2\sqrt{\frac{m}{M}}.
$
Now in both cases, the result follows from \cite{I}. The third claim is exactly \cite{I}*{Cor. 4}. 
\end{proof}

\begin{qu}\label{qu2}
Given the previous argument, we would like to raise a question. Suppose $K\in \mathcal{F}_0^n$, $n\geq 3$, and $V(K)=V(B).$ Is it true that the centro-affine curvature of $K$ attains the value $1$ at some point?
\end{qu}

\begin{proof}[Proof of \Cref{nd-affine}]
For all $\ell\in GL(n),$ we have
\[s(\ell K)=\ell s(K)=0,\quad d_{\mathcal{BM}}(\ell K,B)=d_{\mathcal{BM}}(K,B).\] Thus we may assume without loss of generality that \[|h_{K}-1|_{C^{3}}\leq \delta,\]
for some $\delta>0$ to be determined.

Define the functional $\mathcal{P}:\mathcal{F}_0^n\to (0,\infty)$ by \[\mathcal{P}(L)=\mathcal{P}(h_L)=\frac{1}{V(L)V(L^{\ast})}.\]
We have
\begin{align}\label{eq: 1}
(\operatorname{grad}\mathcal{P})(h_K)&=\mathcal{P}^2(K)\left(\frac{V(K)}{h_K^{n+1}}-V(K^{\ast})f_K\right)\\
&=\frac{V(K^{\ast})\mathcal{P}^2(K)}{h_K^{n+1}}\left(\frac{V(K)}{V(K^{\ast})}-\frac{1}{H_K}\right).\nonumber
\end{align}

By \cite{Sim96}*{Sec. 3.13 (ii)}, there exist $\delta,c_2>0$ and $\alpha\in (0,1/2]$, such that for any $K$ with
$|h_K-1|_{C^{3}}\leq \delta$, we have
\begin{equation}\label{eq: 2}
\left|\frac{1}{V(K)V(K^{\ast})}-\frac{1}{V(B)^2}\right|^{1-\alpha}\leq c_2\left|(\operatorname{grad}\mathcal{P})(h_K)\right|_{L^2}.
\end{equation}
By \cite{Sim96}*{p. 80} and \cite{Iva18}*{Lem. 4.1, 4.2} we can choose $\alpha=1/2.$

We estimate the right-hand side of (\ref{eq: 2}) in terms of $\mathcal{R}_{-n}(K)-1.$ Note that $m\leq H_K\leq M$ implies that
\[\frac{1}{M}\le\frac{V(K)}{V(K^{\ast})}=\frac{\int h_Kf_Kd\sigma}
{\int h_Kf_KH_Kd\sigma}
\le\frac{1}{m}.\]
Therefore we obtain
\begin{equation}\label{eq: 3}
\frac{1}{M}\leq \frac{V(K)}{V(K^{\ast})}\leq \frac{1}{m}\quad \mbox{and}\quad \left|\frac{V(K)}{V(K^{\ast})}-\frac{1}{H_K}\right|\leq \frac{M-m}{Mm}.
\end{equation}
On the other hand, by \eqref{eq: 3} and the Blaschke-Santal\'{o} inequality,
\begin{equation}\label{eq: 4}
V(K^{\ast})^2\leq MV(B)^2.
\end{equation}
Putting (\ref{eq: 1}), (\ref{eq: 2}), (\ref{eq: 3}), and (\ref{eq: 4}) all together we arrive at
\[\left|\frac{1}{V(K)V(K^{\ast})}-\frac{1}{V(B)^2}\right|^{\frac{1}{2}}\leq c_3
\left(\mathcal{R}_{-n}(K)-1\right)\frac{\mathcal{P}^2(K)\left| h_K^{-n-1}\right|_{L^2}}{V(K^{\ast})}.\]
Since we are in a small neighborhood of the unit ball, the term 
\[\frac{\mathcal{P}^2(K)\left| h_K^{-n-1}\right|_{L^2}}{V(K^{\ast})}\]
is bounded. Using again the Blaschke-Santal\'{o} inequality we obtain
\[1-c_4\left(\mathcal{R}_{-n}(K)-1\right)^2\leq \frac{V(K)V(K^{\ast})}{V(B)^2}.\]
In view of \cite{B}*{Thm. 1.1}, the proof is complete. 
\end{proof}

\section*{Acknowledgment}
I would also like to thank the referee and the editor for the constructive suggestions that have led to a much better presentation of the original manuscript.
\bibliographystyle{amsalpha-nobysame}

\vspace{10mm}

	\textsc{Institut f\"{u}r Diskrete Mathematik und Geometrie,\\ Technische Universit\"{a}t Wien,
		Wiedner Hauptstr 8-10,\\ 1040 Wien, Austria, }\email{\href{mailto:mohammad.ivaki@tuwien.ac.at}{mohammad.ivaki@tuwien.ac.at}}
\end{document}